\theoremstyle{definition}
\newtheorem{lem}{Lemma}[section]
\newtheorem{cor}[lem]{Corollary}
\newtheorem{prop}[lem]{Proposition}
\newtheorem{thm}[lem]{Theorem}
\newtheorem{defn}[lem]{Definition}
\newtheorem{ex}[lem]{Example}
\newtheorem{notation}[lem]{Notation}
\newtheorem{rem}[lem]{Remark}
\renewcommand{\geq}{\geqslant}
\renewcommand{\leq}{\leqslant}
\numberwithin{equation}{section}
\begin{document}
\author{Jim Coykendall}
\address{School of Mathematical and Statistical Sciences\\
	Clemson University\\
	Clemson, SC 29634}
\email[J.~Coykendall]{jcoyken@clemson.edu}
\keywords{Noetherian, SFT}
\subjclass[2010]{Primary: 13C05, 13A15, 13C13}
\author{Tridib Dutta}
\email[T.~Dutta]{dutta.tridib@gmail.com}

\title{Rings of Very Strong Finite Type}
\begin{abstract}
The SFT (for {\it strong finite type}) condition was introduced by J. Arnold \cite{Ar1973} in the context of studying the condition for formal power series rings to have finite Krull dimension. In the context of commutative rings, the SFT property is a near-Noetherian property that is necessary for a ring of formal power series to have finite Krull dimension behavior. Many others have studied this condition in the context of the dimension of formal power series rings. In this paper, we explore a specialization (and in some sense a more natural) variant of the SFT property that we dub the VSFT (for {\it very strong finite type}) property. As is true of the SFT property, the VSFT property is a property of an ideal that may be extended to a global property of a commutative ring with identity. Any ideal (resp. ring) that has the VSFT property has the SFT property. In this paper we explore the interplay of the SFT property and the VSFT property.
\end{abstract}

\maketitle

\section{Introduction and Background}

In this paper, we introduce the notion of {\it very
strong finite type} (VSFT) to the study of commutative rings and their ideals. This property is a strengthening of the strong
finite type (SFT) property, which was first introduced by Arnold (see \cite{Ar1973}) in the context of studying the dimension of formal power series rings. It is well known that if $R$ is a commutative Noetherian ring with identity and $\text{dim}(R)=n$ then $\text{dim}(R[[x]])=n+1$. In the non-Noetherian case, precious little is known. The first major result was the following theorem from \cite{Ar1973}:

\begin{thm}
Let $R$ be a commutative ring with identity. If $R$ does not have the SFT property then $\text{dim}(R[[x]])$ is infinite.
\end{thm}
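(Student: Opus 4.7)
The plan is to show that any non-SFT ideal of $R$ forces $R[[x]]$ to carry an infinite strictly ascending chain of prime ideals, hence $\dim(R[[x]]) = \infty$. The argument breaks into three stages: reducing to a maximal non-SFT ideal that is prime, extracting a sequence that witnesses the failure of SFT, and translating this sequence into an infinite chain of primes in $R[[x]]$.

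First, I would apply Zorn's lemma to the collection of non-SFT ideals of $R$ containing the given non-SFT ideal. The union of a chain of non-SFT ideals is non-SFT, because an SFT witness is finite data (a finitely generated subideal $J$ together with a single exponent $n$) which would already live inside some member of the chain and make that member SFT. So we obtain an ideal $P$ maximal with respect to not being SFT. Such $P$ must be prime: if $ab \in P$ with $a, b \notin P$, then $P + (a)$ and $P + (b)$ are SFT by maximality, with witnesses $(J_1, n_1)$ and $(J_2, n_2)$. Since $J_1 J_2 \subseteq (P + (a))(P + (b)) \subseteq P$ (using $ab \in P$) and is finitely generated, while for any $c \in P$ one has $c^{n_1} \in J_1$ and $c^{n_2} \in J_2$ and therefore $c^{n_1+n_2} \in J_1 J_2$, it follows that $P$ itself would be SFT, a contradiction.

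Next, since $P$ is non-SFT, I would inductively select $a_1, a_2, a_3, \ldots \in P$ with $a_n^n \notin (a_1, a_2, \ldots, a_{n-1})$ for every $n \geq 1$. Form the power series $f(x) = \sum_{n \geq 1} a_n x^n \in P[[x]] \subseteq R[[x]]$, and for each $k \geq 1$ let $Q_k$ be a minimal prime over $(a_1, \ldots, a_k, f(x))\,R[[x]]$ lying inside $P[[x]] + xR[[x]]$. The target is to organize these minimal primes into a strictly ascending chain of unbounded length inside $P[[x]] + xR[[x]]$.

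The main obstacle is establishing strict ascent, which is exactly the place where the failure of SFT has to be used: if the chain stabilized at $Q_k = Q_{k+1}$, then after localizing at $Q_k$ a careful look at the power series expansion of $f(x) - (a_1 x + \cdots + a_k x^k)$ together with the Noetherian-like behavior forced by minimal primes would produce an integer $m$ with $a_{k+1}^m \in (a_1, \ldots, a_k)$, contradicting the defining property of the sequence. A secondary delicate point is coherence of the minimal prime choices, so that the $Q_k$ genuinely form a chain rather than a family of primes with growing heights but no nesting; this is typically arranged by choosing each $Q_{k+1}$ to be a minimal prime of the enlarged ideal that contains a previously fixed $Q_k$. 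Once both are handled the chain has unbounded length, so $\dim(R[[x]]) = \infty$ as required.
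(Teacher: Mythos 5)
A contextual note first: the paper itself does not prove this theorem; it quotes it verbatim as background from Arnold's paper \cite{Ar1973}, so there is no internal proof to compare against. Your first two stages are nonetheless sound and do match the opening of Arnold's argument: the Zorn reduction of a non-SFT ideal to a non-SFT \emph{prime} $P$ is, mutatis mutandis, the SFT half of the paper's Theorem~\ref{PrimeSufficient}, and the inductive choice of $a_1,a_2,\ldots\in P$ with $a_n^n\notin(a_1,\ldots,a_{n-1})$ is a direct use of the failure of SFT.

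The third stage -- turning this sequence into an infinite strictly ascending chain of primes in $R[[x]]$ -- is where all of the difficulty of the theorem lives, and you do not establish it. Two concrete problems. (i) \emph{Coherence}: a minimal prime $Q_{k+1}$ of $(a_1,\ldots,a_{k+1},f)$ need not contain a previously fixed minimal prime $Q_k$ of $(a_1,\ldots,a_k,f)$; it only contains \emph{some} minimal prime of the smaller ideal. Forcing $Q_{k+1}\supseteq Q_k$ means silently replacing ``minimal over $(a_1,\ldots,a_{k+1},f)$'' with ``minimal over $Q_k+(a_{k+1})$,'' a different construction whose properties you have not analyzed. (ii) \emph{Strict ascent}: the argument you sketch would, assuming $Q_k=Q_{k+1}$, at best produce \emph{some} integer $m$ with $a_{k+1}^m\in(a_1,\ldots,a_k)$. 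But your sequence only guarantees $a_{k+1}^{k+1}\notin(a_1,\ldots,a_k)$; a contradiction follows only if $m\le k+1$, and nothing you say controls $m$. This is not a cosmetic issue: the failure of SFT gives, for each exponent $n$, some element whose $n$-th power escapes the finitely generated subideal -- it does \emph{not} give a single element all of whose powers escape -- so the sequence as constructed cannot support the contradiction you invoke. Moreover, $R[[x]]$ is not Noetherian here, so the appeal to ``Noetherian-like behavior forced by minimal primes'' has no content. What you have written is a plan for the hard step, not a proof of it.
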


An obvious question motivated by the previous theorem is, ``if $R$ is an SFT ring of finite dimension, is it true that $\text{dim}(R[[x]])$ is finite?". This question was answered in the negative in \cite{Co2002} and this work has been augmented in a number of works including \cite{kp2} and \cite{KLLP}. The SFT property has been studied as a tool in studying Krull dimension (especially in power series rings) as in \cite{TK}, \cite{CKT}, \cite{KLLP} and also in studying more general chains of primes and the behavior of the prime spectrum as in \cite{LL} and \cite{LL2}. Additionally, the SFT property and some of its generalizations have been studied in their own right in (for example) \cite{coco}, \cite{ek}, and \cite{kp}.

As a property of a commutative ring, the SFT property is a near-Noetherian property, and hence there are a number of natural questions that arise, some of which have been answered and some have not. For example, it is known (see \cite{Co2002}) that if $R$ is SFT, then $R[[x]]$ may not be (in contrast with the well known fact that if $R$ is Noetherian, then $R[[x]]$ is Noetherian). On the other hand, if $R$ is SFT, the status of $R[x]$ (that is, whether the SFT property persists in the polynomial extension) remains open.

The aim of this paper is to look at the SFT and VSFT (to be defined presently) properties and explore their interrelation and compare and contrast with the Noetherian property.

We begin by recalling the notion of SFT ideals and the notion of an SFT ring. In this paper, ``ring" refers to a commutative ring with identity.

\begin{defn}\label{SFTdef}
Let $R$ be a ring and $I$ be an ideal of $R$. Then $I$
is called an SFT ideal if there exists a finitely generated ideal
$\mathcal{B} \subseteq I$ and a positive integer $n$ such that $x^n
\in \mathcal{B}$ for all $ x \in I$.
\end{defn}

\begin{defn}
If every ideal of a ring, $R$, is SFT, then we say that $R$ is an SFT ring.
\end{defn}

The following notation will streamline our work; it will appear extensively in the sequel.

\begin{notation}
Suppose as in Definition \ref{SFTdef} we have an SFT ideal $I$, a finitely generated ideal $\mathcal{B}\subseteq I$, and a positive integer $n$ such that $x^n\in\mathcal{B}$ for all $x\in I$, we will say that $I$ is SFT with data $(I,\mathcal{B},n)$, and we refer to $n$ as the index. We note that the choices of $\mathcal{B}$ and $n$ are not unique in general.
\end{notation}

\section{Motivating Examples, the VSFT Property, and Ideal-Theoretic Results}

It is natural to wonder that if $I$ is an SFT ideal with data $(I,\mathcal{B},n)$, then is there a positive integer $m$ such that $I^m\subseteq \mathcal{B}$? In the special case where the SFT ideal is finitely generated, this is certainly the case (and so rings with this property would be a natural generalization of Noetherian rings). However, this property does not persist in general as we see in the following example.

\begin{ex}
\label{SFTnotVSFTex1}
Let $\mathbb{F}_p$ be a field of characteristic $p$; we begin with the domain $R=\mathbb{F}_p[x_1,x_2,\cdots, x_n,\cdots]$ and let $I\subseteq R$ be the ideal $I:=(x_1^p,x_2^p,\cdots)$. We let the image of $x_i$ in $R/I$ be denoted by $\overline{x}_i$, and note that $T:=R/I$ is a zero-dimensional ring with unique prime ideal $\mathfrak{M}=(\overline{x}_1,\overline{x}_2,\cdots)$. Note that each $z\in\mathfrak{M}$ has the property that $z^p=0$ and hence $\mathfrak{M}$ is SFT with data $(\mathfrak{M},0,p)$. However there is no positive integer $n$ and finitely generated ideal $\mathcal{B}$ such that $\mathfrak{M}^n\subseteq\mathcal{B}$. To see this, we suppose that there is a finitely generated $\mathcal{B}$ and positive integer $n$ such that $\mathfrak{M}^n\subseteq\mathcal{B}$. Since $\mathcal{B}$ is finitely generated (say by $y_1,y_2,\cdots, y_t$) and each generator $y_i$ has the property that $y_i^p=0$, it is easy to see that $\mathcal{B}$, and hence $\mathfrak{M}$, is a nilpotent ideal. But for all $n$, the element $\overline{x}_{1}\overline{x}_{2}\cdots \overline{x}_{n}$ is a nonzero element of $\mathfrak{M}^n$, contradicting its nilpotency. 
\end{ex}

In the above example, the computation depends heavily on the fact
that the ring is of characteristic $p$. If we try to mimic this
example in the characteristic 0 situation, we encounter some obvious
difficulties. In
fact, it turns out that a similar example in the zero characteristic
situation is more complicated in nature. We shall provide such an
example in due course. But at this moment, we will produce several
examples laying the groundwork for further study.

Note that in Example \ref{SFTnotVSFTex1}, we used a non-domain to illustrate the point we were trying to make. We will now provide an example of an SFT ideal that is not finitely generated in an integral domain setting.

\begin{ex}\label{SFTnotVSFTex2}

Let us consider the following domain 

\[
R =
{\mathbb{F}_2[y,x_1,x_2,\cdots,\frac{y}{x_1},\frac{y}{x_1^2},\cdots,\frac{y}{x_2},\frac{y}{x_2^2},\cdots]}
\]

\noindent Now consider the ideal $I =
(y,\frac{y}{x_1},\frac{y}{x_1^2},\cdots,\frac{y}{x_2},\frac{y}{x_2^2},\cdots)$.
Clearly, the generators of the ideal satisfy the  SFT property
since $(\frac{y}{x_k^m})^2 = (y)(\frac{y}{x_k^{2m}})\in (y), $ for
all $ m > 0$. Since the characteristic of the ring is 2, it is not
difficult to verify that any arbitrary element of $I$ raised to a
power of 2 belongs to the ideal $(y)$. This shows that $I$ is SFT.
\end{ex}

In Example \ref{SFTnotVSFTex2}, the product of two arbitrary generators of $I$ do not belong to $(y)$ (for instance $(\frac{y}{x_1})(\frac{y}{x_2})$), that is, $I^2 \nsubseteq (y)$.

The previous two examples share the theme that although $I$ is an SFT ideal, no power of $I$ can be put inside the finitely generated sub-ideal. This motivates the following definition of a specialized version of SFT ideals that is perhaps more ideal-theoretic (as opposed to the more elemental nature of the SFT concept). 

\begin{defn}\label{VSFTdef}
Let $R$ be a ring and $I\subseteq R$ an ideal. Then $I$
is said to be of very strong finite type (VSFT) if there exists a
finitely generated ideal $\mathcal{B}$ and a positive integer $n$
such that $I^n \subseteq \mathcal{B}$. Similar to the SFT case, we will say such a VSFT ideal is VSFT with data $(I,\mathcal{B},n)$.
\end{defn}

\begin{defn}
If every ideal of a ring, $R$, is VSFT,  then we say that the $R$ is a VSFT ring.
\end{defn}

We remark that it is clear that any ideal that has the VSFT property is SFT (and hence any VSFT ring is SFT).

We make a couple of easy, but useful observations.

\begin{rem}
If $I$ is a VSFT ideal with data $(I,\mathcal{B}, N)$ then $I$ is VSFT with data $(I,\mathcal{B},k)$ for every integer $k \geq N$.
\end{rem}

\begin{rem}
If $I$ is finitely generated then it is trivially
VSFT (the VSFT data, in this case, can be given by $\{I, I, 1\}$). So in
particular, a Noetherian ring is also a VSFT ring.
\end{rem}


Since Example \ref{SFTnotVSFTex1} and Example \ref{SFTnotVSFTex2}, are SFT ideals which are not VSFT, we now produce a nontrivial example of a VSFT ideal.

\begin{ex}\label{VSFTex1}

Consider the ideal $I = (2,
2x,2x^2,2x^3,\cdots,2x^n,\cdots)\subseteq \mathbb{Z}+2x\mathbb{Z}[x]$. As was noted earlier $x \notin R$
and so the above ideal is not finitely generated. Consider the ideal
generated by the element 2. Then $(2)\subset I$. The product of any
two distinct generators of the ideal is in $(2)$, that is,
$(2x^m)(2x^k)= (2)(2x^{m+k}) \in (2)$ as $2x^{m+k}\in R$. More
generally, let us consider two elements from the ideal $I$. They are
$\alpha = 2x^{k_1}p_1(x)+\cdots +2x^{k_r}p_r(x)$, where $p_i(x) \in R$
and $\gamma = 2x^{m_1}q_1(x)+\cdots +2x^{m_t}q_t(x)$, where
 $q_j(x) \in R $. Then it is clear that $\alpha \gamma \in (2)$. Hence $I$ is a VSFT
 ideal.
\end{ex}

A rich source of nontrivial VSFT ideals can be constructed from almost integral elements. The next result highlights this connection and generalizes the previous example.

\begin{thm}\label{almostIntegral}
Let $K$ be the field of fractions of the domain $R$. Let $\alpha \in K$ be an almost integral element over $R$ and $0\neq r\in R$ an element such that $r\alpha^n\in R$ for all $n\in\mathbb{N}$. Let $\mathfrak{M} = (\alpha, \alpha^2,\cdots)$ be the $R-$submodule of $K$ generated by $\alpha$ and all its powers. Then the ideal $(r,r\mathfrak{M}) \subseteq R$ is VSFT with data $\{(r,r\mathfrak{M}),(r),2\}$.
\end{thm}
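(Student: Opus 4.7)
The plan is to unpack the ideal $(r,r\mathfrak{M})$ into the cleaner form $rR[\alpha]$ (viewed inside the fraction field $K$) and then square it directly.

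First I would observe that, as $R$-submodules of $K$, one has $R+\mathfrak{M}=R[\alpha]$, since $\mathfrak{M}=\sum_{n\geq 1}R\alpha^n$. Consequently
\[
(r,r\mathfrak{M}) \;=\; (r)+r\mathfrak{M} \;=\; r(R+\mathfrak{M}) \;=\; rR[\alpha],
\]
and the almost-integrality hypothesis $r\alpha^n\in R$ (for all $n\in\mathbb{N}$) guarantees $rR[\alpha]\subseteq R$, so this is indeed an ideal of $R$. The finitely generated subideal in the VSFT data is simply $(r)\subseteq(r,r\mathfrak{M})$.

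Second, because $R[\alpha]$ is a ring, it is closed under its own multiplication, so squaring the ideal produces
\[
(r,r\mathfrak{M})^2 \;=\; (rR[\alpha])\cdot(rR[\alpha]) \;=\; r^2R[\alpha].
\]
To finish, it suffices to check $r\cdot R[\alpha]\subseteq R$: a typical element $a_0+a_1\alpha+\cdots+a_k\alpha^k$ of $R[\alpha]$ becomes, after multiplication by $r$, the sum $ra_0+a_1(r\alpha)+\cdots+a_k(r\alpha^k)$, each term of which lies in $R$ by the hypothesis. Hence
\[
(r,r\mathfrak{M})^2 \;=\; r\cdot(rR[\alpha]) \;\subseteq\; rR \;=\; (r),
\]
which is precisely the VSFT data $\{(r,r\mathfrak{M}),(r),2\}$.

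I do not anticipate a substantive obstacle. The only care required is the bookkeeping in the opening identification: rewriting $(r,r\mathfrak{M})$ (a priori described by the generating set $\{r\}\cup\{r\alpha^n:n\geq 1\}$) as the more symmetric $rR[\alpha]$, and using the standard fact that the square of an ideal generated by a set is generated by pairwise products of those generators. Once the rewriting is made, the conclusion is essentially a one-line consequence of the fact that $r\cdot R[\alpha]\subseteq R$, which in turn is exactly the content of almost integrality.
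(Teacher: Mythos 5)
Your proof is correct and takes essentially the same approach as the paper's (very terse) proof: both reduce to the observation that a product of two elements of $r\mathfrak{M}$ is $r$ times an element of $r\mathfrak{M}\subseteq R$, hence lies in $(r)$. Your packaging via the identification $(r,r\mathfrak{M})=rR[\alpha]$ and the ring structure of $R[\alpha]$ is cleaner and a bit more complete (the paper only explicitly checks the product $rm_1\cdot rm_2$ and leaves the remaining pairwise products implicit), but the underlying computation is identical.
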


\begin{proof}
If $rm_1,rm_2$ are such that $m_1,m_2\in\mathfrak{M}$, then $rm_1m_2\in r\mathfrak{M}\subset R$.
\end{proof}

A direct consequence of the definition of VSFT (resp. SFT) ideals is captured in the following proposition.

\begin{prop}\label{equalRadical}
Let $R$ be a ring and $I$ be a VSFT (resp. SFT) ideal with data $\{I,\mathcal{B},n\}$. Then $\sqrt{I} = \sqrt{\mathcal{B}}$.
\end{prop}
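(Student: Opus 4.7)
The plan is to prove the two containments $\sqrt{\mathcal{B}}\subseteq\sqrt{I}$ and $\sqrt{I}\subseteq\sqrt{\mathcal{B}}$ separately, treating the VSFT and SFT hypotheses uniformly once one observes that in either case each element of $I$ has a fixed power lying in $\mathcal{B}$ (in the VSFT case because $x\in I$ implies $x^n\in I^n\subseteq\mathcal{B}$, in the SFT case by definition).

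The first containment is immediate: by definition $\mathcal{B}\subseteq I$, and the radical operator is inclusion-preserving, giving $\sqrt{\mathcal{B}}\subseteq\sqrt{I}$.

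For the reverse containment, I would let $x\in\sqrt{I}$, so that there exists a positive integer $k$ with $x^k\in I$. Applying the data of $I$ (in either sense) to the element $x^k\in I$ yields $(x^k)^n = x^{kn}\in\mathcal{B}$, and hence $x\in\sqrt{\mathcal{B}}$. This shows $\sqrt{I}\subseteq\sqrt{\mathcal{B}}$ and completes the argument.

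There is no real obstacle here; the proposition is essentially a one-line observation from the definitions. The only subtlety worth flagging is that the single argument above handles both the SFT and VSFT hypotheses simultaneously, so the parenthetical statement of the proposition does not require a case split.
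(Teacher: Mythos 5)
Your proof is correct and is essentially the same argument as the paper's: the paper writes the single chain $\mathcal{B}\subseteq I\subseteq\sqrt{I}\subseteq\sqrt{\mathcal{B}}$ and takes radicals, while you verify the two inclusions separately, but the underlying observation (each element of $I$ has its $n$-th power in $\mathcal{B}$) is identical.
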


\begin{proof}
By the definition of an SFT ideal, we have $\mathcal{B} \subseteq I \subseteq \sqrt{I} \subseteq \sqrt{\mathcal{B}}$ and hence equality is immediate. Since any VSFT ideal is SFT, the proof is complete.
\end{proof}

As a corollary, we have the following result.

\begin{cor}
If $Q$ is a radical VSFT (resp. SFT) ideal in $R$ 
with data $\{Q, \mathcal{B}, n\}$, then $Q = \sqrt{\mathcal{B}}$.
\end{cor}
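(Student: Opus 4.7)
The plan is to invoke Proposition \ref{equalRadical} and combine it with the defining property of a radical ideal. Specifically, since $Q$ is VSFT (or SFT) with data $\{Q,\mathcal{B},n\}$, the proposition yields $\sqrt{Q} = \sqrt{\mathcal{B}}$. By hypothesis $Q$ is radical, so $Q = \sqrt{Q}$. Chaining these two equalities gives $Q = \sqrt{\mathcal{B}}$, which is the desired conclusion.

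There is essentially no obstacle here: the corollary is a one-line consequence of the preceding proposition together with the definition of a radical ideal. The only thing to be slightly careful about is the parenthetical remark covering both the VSFT and SFT cases, but Proposition \ref{equalRadical} is already stated in that parenthetical form, so a single application covers both. Consequently, the proof reduces to citing the proposition and substituting $Q = \sqrt{Q}$ into the equality $\sqrt{Q} = \sqrt{\mathcal{B}}$.
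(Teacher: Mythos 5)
Your proof is correct and is exactly the argument the paper intends: apply Proposition \ref{equalRadical} to get $\sqrt{Q}=\sqrt{\mathcal{B}}$, then substitute $Q=\sqrt{Q}$ since $Q$ is radical. The paper leaves this corollary unproved precisely because it is this one-line consequence.
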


What follows from Proposition \ref{equalRadical} is the fact that if $I$ is not finitely generated, the sub-ideal $\mathcal{B}$ cannot be a radical ideal. We capture this observation in the following proposition.


\begin{prop}
Let $I$ be a VSFT ideal with data $(I, \mathcal{B}, N)$, where $I$ is not finitely generated. Then
$\mathcal{B}$ cannot be a radical ideal (and hence cannot be prime).
\end{prop}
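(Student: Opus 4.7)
The plan is to argue by contradiction using Proposition \ref{equalRadical}. Suppose, toward contradiction, that $\mathcal{B}$ is a radical ideal, so that $\mathcal{B} = \sqrt{\mathcal{B}}$. By Proposition \ref{equalRadical} applied to the VSFT data $(I,\mathcal{B},N)$, we have $\sqrt{I} = \sqrt{\mathcal{B}}$, and since $I \subseteq \sqrt{I}$ always, this chain of equalities and containments yields
\[
I \;\subseteq\; \sqrt{I} \;=\; \sqrt{\mathcal{B}} \;=\; \mathcal{B}.
\]

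Combining this with the reverse containment $\mathcal{B} \subseteq I$ that is built into the definition of VSFT data (inherited from the underlying SFT data via Definition \ref{SFTdef}), we conclude $\mathcal{B} = I$. But $\mathcal{B}$ is finitely generated by hypothesis, so this forces $I$ itself to be finitely generated, contradicting our assumption. The parenthetical ``and hence cannot be prime'' is then immediate, since every prime ideal is radical.

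There is no real obstacle here; the argument is essentially a one-line deduction from Proposition \ref{equalRadical} once one observes that the finite-generation of $\mathcal{B}$ combined with $\mathcal{B} = I$ is incompatible with the hypothesis that $I$ is not finitely generated. The only thing to double-check is that the proposition is genuinely being invoked in its VSFT form (rather than merely its SFT form), but since Proposition \ref{equalRadical} is stated uniformly for both cases, this is automatic.
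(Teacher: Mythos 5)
Your proof is correct and follows exactly the route the paper has in mind: the paper itself flags this proposition as an immediate consequence of Proposition \ref{equalRadical}, and you supply precisely that one-line deduction, using $\mathcal{B}\subseteq I\subseteq\sqrt{I}=\sqrt{\mathcal{B}}=\mathcal{B}$ to force $I=\mathcal{B}$ and hence contradict the hypothesis that $I$ is not finitely generated.
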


With this basic groundwork laid and some initial examples presented, we now compare and contrast some of the basic properties of the SFT and VSFT properties.


We begin this section by observing the fact that if $I$ is a VSFT (resp. SFT) ideal, then any power of $I$ is also a VSFT (resp. SFT) ideal (with potentially varying (V)SFT data).

\begin{prop}\label{powerVSFT}
Let $R$ be a ring and $I$ be a VSFT (resp. SFT) ideal with VSFT data (resp. SFT data ) $(I,\mathcal{B},n)$ . Then $I^m$ is also a VSFT ideal (resp. SFT) ideal with VSFT data (resp. SFT data) $(I^m,\mathcal{B}^m,n)$ (resp. $(I^m,\mathcal{B}^m,mn)$) for all integers $m \geq 1$. 
\end{prop}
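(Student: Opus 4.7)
The plan is to verify both claims by straightforward manipulation of ideal powers, once we observe that $\mathcal{B}^m$ is finitely generated. Indeed, if $\mathcal{B}=(b_1,\dots,b_k)$, then $\mathcal{B}^m$ is generated by the finite set of degree-$m$ monomials in $b_1,\dots,b_k$, so no issue arises on the side of producing a finitely generated witness.

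For the VSFT statement, I would argue purely at the level of ideals: the hypothesis is $I^n\subseteq\mathcal{B}$, so by monotonicity of ideal powers $(I^n)^m\subseteq\mathcal{B}^m$. Rewriting, $(I^m)^n=I^{mn}=(I^n)^m\subseteq\mathcal{B}^m$, which is exactly the VSFT condition for $I^m$ with data $(I^m,\mathcal{B}^m,n)$.

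For the SFT statement, I would argue element-wise. Fix $y\in I^m$. Since $I^m\subseteq I$, the SFT hypothesis on $I$ gives $y^n\in\mathcal{B}$. Therefore $y^{mn}=(y^n)^m\in\mathcal{B}^m$, yielding the SFT condition for $I^m$ with data $(I^m,\mathcal{B}^m,mn)$.

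No substantive obstacle arises; the only subtle point worth flagging in the write-up is the asymmetry in the two exponents. In the VSFT case we exploit the fact that the containment $I^n\subseteq\mathcal{B}$ closes up under arbitrary products, so the exponent $n$ is preserved. In the SFT case we only control individual $n$-th powers of elements of $I$, and an element of $I^m$ has no reason to be an $m$-th power of something in $I$; thus to land in $\mathcal{B}^m$ we must take a further $m$-th power of $y^n$, which explains why the index inflates to $mn$.
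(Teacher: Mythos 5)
Your proof is correct and follows essentially the same approach as the paper: for VSFT, raise the containment $I^n\subseteq\mathcal{B}$ to the $m$-th power, and for SFT, argue element-wise using $I^m\subseteq I$ and then take $m$-th powers. The only small point the paper makes explicit that you leave implicit is that $\mathcal{B}\subseteq I$ gives $\mathcal{B}^m\subseteq I^m$, which is needed to satisfy the letter of the SFT definition (which requires the finitely generated witness to sit inside the ideal).
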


\begin{proof}

For the VSFT case, we note that $I^n\subseteq\mathcal{B}$ and so $(I^n)^m=(I^m)^n\subseteq\mathcal{B}^m$. Since $\mathcal{B}^m$ is finitely generated, $I^m$ is VSFT with data $(I^m,\mathcal{B}^m,n)$.

In the case that $I$ is SFT with data $(I,\mathcal{B}, n)$, we first note that since $\mathcal{B}\subseteq I$, then $\mathcal{B}^{mn}\subseteq\mathcal{B}^m\subseteq I^m$ and is finitely generated. So, if $\alpha\in I^m\subseteq I$, then $\alpha^n\in\mathcal{B}$ and hence $\alpha^{mn}\in\mathcal{B}^m$. This establishes the proposition. 
\end{proof}

Given an (V)SFT ideal with data $\{I.\mathcal{B},n\}$, we have established that $I^m$ is also (V)SFT with related data. It would be interesting to know what is the smallest positive integer $k$ (in terms of $n$ and $m$) such that there is a finitely generated ideal $J$ with the property that $I^m$ has (V)SFT data $\{I^m, J, k\}$.

The next result in this section is a central result which shows that the (V)SFT property is determined by the prime spectrum; that is, a ring is (V)SFT if and only if each prime ideal is (V)SFT.

\begin{thm}\label{PrimeSufficient}
A ring $R$ is VSFT (resp. SFT) if and only if every prime ideal $P\subseteq R$ is VSFT (resp. SFT).
\end{thm}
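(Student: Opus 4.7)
The forward direction is immediate, so the content lies in the converse: assuming every prime ideal is VSFT (resp.\ SFT), we must prove every ideal has this property. The plan is a Cohen-style argument by contradiction combined with Zorn's lemma.

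First I would show that the family $\mathcal{F}$ of non-VSFT (resp.\ non-SFT) ideals, if nonempty, is inductively ordered by inclusion. The key observation is that the (V)SFT data always involves a \emph{finitely generated} sub-ideal: if $I=\bigcup_\alpha I_\alpha$ is a chain union and $I$ were (V)SFT with data $(I,\mathcal{B},n)$, then $\mathcal{B}\subseteq I_{\alpha_0}$ for some $\alpha_0$, and passing to $I_{\alpha_0}$ one checks directly that $(I_{\alpha_0},\mathcal{B},n)$ realizes the (V)SFT condition on $I_{\alpha_0}$, contradicting $I_{\alpha_0}\in\mathcal F$. Zorn's lemma then yields a maximal non-(V)SFT ideal $I$.

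The heart of the proof is showing $I$ is prime, which will contradict the hypothesis. Suppose $a,b\notin I$ but $ab\in I$. Then $I\subsetneq I+(a)$ and $I\subsetneq(I:a)$ (since $b\in(I:a)\setminus I$), so by maximality both $I+(a)$ and $(I:a)$ are (V)SFT; pick data $(I+(a),\mathcal B_1,n_1)$ with $\mathcal B_1=(w_1,\dots,w_k)$, and $((I:a),\mathcal B_2,n_2)$. Write each generator as $w_i=u_i+a v_i$ with $u_i\in I$ and $v_i\in R$. The candidate finitely generated sub-ideal of $I$ will be
$$\mathcal{B}\;=\;(u_1,\dots,u_k)\;+\;a^{n_2}\mathcal B_2\;\subseteq\;I,$$
which is in $I$ because $\mathcal B_2\subseteq(I:a)$ gives $a\mathcal B_2\subseteq I$.

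The crucial computation is the following bookkeeping trick. If $x\in I$, then $x\in I+(a)$, so $x^{n_1}=\sum r_i w_i=\sum r_i u_i + a t_x$ where $t_x=\sum r_i v_i$. Since $x^{n_1}$ and $\sum r_i u_i$ both lie in $I$, we get $at_x\in I$, i.e.\ $t_x\in(I:a)$, hence $t_x^{n_2}\in\mathcal B_2$. Now expand $x^{n_1 n_2}=(x^{n_1})^{n_2}=(\sum r_i u_i+at_x)^{n_2}$: every term of the binomial expansion either contains a factor from $(u_1,\dots,u_k)$ or equals $(at_x)^{n_2}=a^{n_2}t_x^{n_2}\in a^{n_2}\mathcal B_2$. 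Either way the term lies in $\mathcal B$, giving the SFT conclusion. For the VSFT case the same computation at the level of ideals yields $I^{n_1}\subseteq(u_1,\dots,u_k)+a(I:a)$, and then the standard identity $(J+K)^{n_2}\subseteq J+K^{n_2}$ combined with $(a(I:a))^{n_2}\subseteq a^{n_2}\mathcal B_2$ produces $I^{n_1 n_2}\subseteq\mathcal B$. In both cases $I$ is (V)SFT, contradicting $I\in\mathcal F$.

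The main obstacle is the combination step: one must choose the generators of $\mathcal{B}_1$ \emph{coherently} into an $I$-part $u_i$ and an $a$-coefficient $v_i$ so that the auxiliary element $t_x$ (or the auxiliary ideal $a(I:a)$) ends up in $(I:a)$, allowing $\mathcal B_2$ to be invoked. The mild calculation $(J+K)^{n_2}\subseteq J+K^{n_2}$ and the binomial expansion then handle the rest uniformly, so that essentially one argument covers both the SFT and VSFT cases.
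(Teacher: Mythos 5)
Your proposal is correct, and it shares the paper's overall Cohen-style Zorn framework, but the primality step is genuinely different. The paper works with the \emph{symmetric} pair $(\mathcal{P},a)$ and $(\mathcal{P},b)$: it normalizes $\mathcal{B}_a=(x_1,\dots,x_s,a)$ with $x_i\in\mathcal P$ (and likewise $\mathcal{B}_b$), sets $\mathcal{B}=\mathcal{B}_a\mathcal{B}_b$, checks by inspection of the generators that $\mathcal{B}\subseteq\mathcal P$, and concludes $\mathcal P^{M+N}\subseteq\mathcal{B}_a\mathcal{B}_b$ at once from $\mathcal P^{M}\subseteq\mathcal{B}_a$ and $\mathcal P^{N}\subseteq\mathcal{B}_b$. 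You instead use the classical \emph{asymmetric} pair $I+(a)$ and $(I:a)$ from Cohen's theorem, which forces the bookkeeping decomposition $w_i=u_i+av_i$, the observation that the auxiliary factor lands in $(I:a)$, and the containment $(J+K)^{n_2}\subseteq J+K^{n_2}$. What the paper's choice buys is brevity: $\mathcal{B}_a\mathcal{B}_b$ is automatically the right finitely generated sub-ideal and no expansion is needed. What your choice buys is that it treats both cases from the same blueprint (the paper simply cites Arnold for the SFT direction and only argues VSFT), and it parallels the textbook Cohen argument more closely. Both arguments are valid; one small side remark is that the paper's method in fact also handles SFT uniformly (from $x^M\in\mathcal{B}_a$ and $x^N\in\mathcal{B}_b$ one gets $x^{M+N}\in\mathcal{B}_a\mathcal{B}_b$), so the asymmetric decomposition is not strictly necessary for the unified treatment you aimed at, though it works fine.
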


\begin{proof}
The statement for the SFT case was established in \cite{Ar1973} and so we will only consider the VSFT case.

We first note that if $R$ is VSFT, then every ideal of $R$, in particular, every prime ideal of $R$, is VSFT.

For the other direction, we assume that every prime ideal of $R$ is VSFT, but $R$ itself is not VSFT. By assumption, we can find an ideal $I\subseteq R$ which is not a VSFT ideal. Let $\Gamma$ be the set of
all non-VSFT ideals of $R$ containing the ideal $I$. $\Gamma$ is a non-empty, partially ordered set under set-theoretic inclusion. If we let $\mathfrak{C}$ be a chain in
$\Gamma$ and $\mathcal{P}= \bigcup_{J\in\mathfrak{C}} J$. then $\mathcal{P} $ is an upper bound for $\mathfrak{C}$ provided that $\mathcal{P}$ is not VSFT.

We first establish that $\mathcal{P}$ is not a VSFT ideal. To this end, we assume that $\mathcal{P}$ is VSFT with data $\{\mathcal{P},\mathcal{B}, N\}$. Additionally, we assume that $\mathcal{B}=(y_1,y_2,y_3,\cdots,y_k)$ 
Since $\mathcal{P}$ is a union of the chain of ideals, $\mathfrak{C}$, each $y_i\in J_{\alpha_i}$ for some $J_{\alpha_i}\in\mathfrak{C}$. Since $\mathfrak{C}$ is a chain, we can say, without loss of generality, that each $J_{\alpha_i}\subseteq J_{\alpha_k}$ for all $1\leq i\leq k$. Hence, $\mathcal{B}\subseteq J_{\alpha_k}\subseteq\mathcal{P}$. By assumption, we have that $\mathcal{P}^N\subseteq\mathcal{B}$, and hence $J_{\alpha_k}^N\subseteq\mathcal{B}$. This contradicts the fact that $J_{\alpha_k}$ is not VSFT and establishes our first claim.

Our next step will be to show that $\mathcal{P}$ is a prime ideal. To this end, we assume that $\mathcal{P}$ is not prime and find two elements $a,b\notin R$ such that $ab \in \mathcal{P}$. Then the ideals $(\mathcal{P},a)$ and $(\mathcal{P}, b)$ are both VSFT, by the
maximality of $\mathcal{P}$. Hence there exist
finitely generated ideals $\mathcal{B}_a \subseteq (\mathcal{P},a)$
and $\mathcal{B}_b \subseteq (\mathcal{P},b)$ and fixed positive
integers $M,N$ such that $(\mathcal{P},a)^M \subseteq \mathcal{B}_a$
and $(\mathcal{P},b)^N \subseteq \mathcal{B}_b$. 

As $\mathcal{B}_a\subseteq(\mathcal{P},a)$ we can assume that $\mathcal{B}_a=(x_1,x_2,\cdots ,x_s, a)$ with each $x_i\in\mathcal{P}$. Similarly, we write $\mathcal{B}_b=(y_1,y_2,\cdots ,y_t, b)$ with each $y_i\in\mathcal{P}$. If we define $\mathcal{B}:=\mathcal{B}_a\mathcal{B}_b$, then

\[
\mathcal{B}=(\{x_iy_j\}_{i,j}, \{ay_j\}_j, \{bx_i\}_i, ab).
\]

Note that the generator $ab\in\mathcal{P}$ by assumption and the generators of the form $x_iy_j, ay_j$ and $bx_i$ are in $\mathcal{P}$ since each $x_i, y_j\in\mathcal{P}$. Collecting our observations, we note that $\mathcal{B}\subseteq\mathcal{P}$ and is finitely generated. Also since $\mathcal{P}^M\subseteq \mathcal{B}_a$ and $\mathcal{P}^N\subseteq\mathcal{B}_b$, then $\mathcal{P}^{M+N}\subseteq\mathcal{B}$, which is a contradiction, as we have established that $\mathcal{P}$ is not VSFT. We conclude that $\mathcal{P}$ is prime.

We see that if $R$ has an ideal that is not VSFT, then $R$ must have a prime ideal that is not VSFT. This completes the proof.
\end{proof}

If $I$ is a (V)SFT ideal with data $\{I,\mathcal{B}, n\}$, it is useful to know how the data in the second two components can be varied. We will see that by varying the third item of data, we can allow the finitely generated sub-ideal of $I$ to be any finitely generated ideal contained in $I$ if its radical contains $I$. We formalize this in the next result which will prove especially useful in the verification of examples.

\begin{thm}\label{anyradical}
Let $I$ be a VSFT (resp. SFT) ideal. If $\mathcal{B}$ is any finitely generated ideal with $\mathcal{B}\subseteq I\subseteq\sqrt{\mathcal{B}}$ then there is a positive integer $m$ such that $I$ has VSFT (resp. SFT) data $(I,\mathcal{B}, m)$
\end{thm}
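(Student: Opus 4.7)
The plan is to use Proposition \ref{equalRadical} as the linchpin. Since $I$ is (V)SFT, it comes equipped with some existing data $(I,\mathcal{B}_0,n)$, where by the proposition $\sqrt{\mathcal{B}_0}=\sqrt{I}$. The hypothesis $\mathcal{B}\subseteq I\subseteq\sqrt{\mathcal{B}}$ immediately gives $\sqrt{\mathcal{B}}=\sqrt{I}$, so $\sqrt{\mathcal{B}_0}=\sqrt{\mathcal{B}}$. The strategy is then to ``upgrade'' the existing data $(I,\mathcal{B}_0,n)$ to data of the form $(I,\mathcal{B},m)$ by pushing a large enough power of $\mathcal{B}_0$ inside $\mathcal{B}$.

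Write $\mathcal{B}_0=(z_1,\dots,z_k)$. Each generator $z_i$ lies in $I\subseteq\sqrt{\mathcal{B}}$, so there is some positive integer $e_i$ with $z_i^{e_i}\in\mathcal{B}$; let $e=\max_i e_i$. A pigeonhole argument on products of generators shows
\[
\mathcal{B}_0^{\,k(e-1)+1}\subseteq\mathcal{B},
\]
because any monomial $z_{i_1}z_{i_2}\cdots z_{i_{k(e-1)+1}}$ must contain some $z_j$ to a power $\geq e$, which already puts the monomial into $\mathcal{B}$. Set $N=k(e-1)+1$ and $m=nN$.

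For the VSFT case, $I^n\subseteq\mathcal{B}_0$ gives
\[
I^{m}=(I^n)^{N}\subseteq\mathcal{B}_0^{\,N}\subseteq\mathcal{B},
\]
so $(I,\mathcal{B},m)$ is VSFT data. For the SFT case, any $x\in I$ has $x^n\in\mathcal{B}_0$, so $x^n$ is an $R$-linear combination of $z_1,\dots,z_k$; raising to the $N$-th power, $x^{nN}=(x^n)^N\in\mathcal{B}_0^{\,N}\subseteq\mathcal{B}$, so $(I,\mathcal{B},m)$ is SFT data.

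The proof is essentially bookkeeping once one notices that the radicals must coincide, so the only mild obstacle is choosing the exponent $m$ correctly; the pigeonhole bound $k(e-1)+1$ is what ensures that \emph{every} product of that many generators of $\mathcal{B}_0$ has some factor repeated at least $e$ times. No genuinely new ideas beyond Proposition \ref{equalRadical} appear to be needed.
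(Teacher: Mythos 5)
Your proof is correct and follows essentially the same route as the paper: both reduce the problem to showing that some power of the original finitely generated sub-ideal $\mathcal{B}_0$ (the paper calls it $J$) lands inside $\mathcal{B}$, using that $\mathcal{B}_0\subseteq I\subseteq\sqrt{\mathcal{B}}$. The only difference is cosmetic: the paper passes to $R/\mathcal{B}$ and invokes the fact that a finitely generated nil ideal is nilpotent as a black box, whereas you unwind that lemma explicitly via the pigeonhole bound $\mathcal{B}_0^{\,k(e-1)+1}\subseteq\mathcal{B}$, which is precisely the standard proof of that fact.
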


\begin{proof}
Let $I$ be VSFT and $\mathcal{B}$ a finitely generated ideal such that $\mathcal{B}\subseteq I\subseteq\sqrt{\mathcal{B}}$. Since $I$ is VSFT, there is a finitely generated ideal $J\subseteq I$ and a positive integer $n$ such that $I^n\subseteq J$. To show that there is a positive integer $m$ such that $I^m\subseteq\mathcal{B}$, we 
pass to the quotient $R/\mathcal{B}$. The image of $J$ in $R/\mathcal{B}$ is finitely generated, and since $J\subseteq I\subseteq\sqrt{\mathcal{B}}$, the image of $J$ is a nil ideal in $R/\mathcal{B}$. Since any finitely generated nil ideal is nilpotent, we must have that there is a positive integer $k$ such that $J^k\subseteq\mathcal{B}$. Hence $I^{nk}\subseteq J^k\subseteq\mathcal{B}$. The proof for the statement for the SFT case is almost identical.
\end{proof}

Finally, we present a theorem that shows that the VSFT property is rather encompassing in the sense that if $J$ is a radical ideal with the VSFT property and $\sqrt{I}=J$ then $I$ is also VSFT.

\begin{thm}\label{modifiedRadicalPower}
Let $R$ be a ring, and $I$ an ideal of $R$ such that its radical, $J$, is a VSFT ideal with data $(J,\mathcal{B},n)$. Then there exists a positive integer $k > 0$ such that $J^k \subseteq I$. Moreover, $I$ is also a VSFT ideal.
\end{thm}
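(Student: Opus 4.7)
The plan is to manufacture a finitely generated ideal \emph{inside} $I$ from the finitely generated ideal $\mathcal{B} \subseteq J$, and then invoke Theorem \ref{anyradical} to do all the heavy lifting. The subtle point is that we are only given $\mathcal{B} \subseteq J = \sqrt{I}$, not $\mathcal{B} \subseteq I$, so we cannot directly use $\mathcal{B}$ as witness data for $I$.

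First I would write $\mathcal{B} = (y_1, y_2, \ldots, y_s)$. Since each $y_i \in J = \sqrt{I}$, there exists a positive integer $k_i$ with $y_i^{k_i} \in I$. Set
\[
\mathcal{B}' := (y_1^{k_1}, y_2^{k_2}, \ldots, y_s^{k_s}),
\]
which is a finitely generated ideal contained in $I$ by construction.

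Next, I want to verify the containment $\mathcal{B}' \subseteq J \subseteq \sqrt{\mathcal{B}'}$ needed to apply Theorem \ref{anyradical} to the VSFT ideal $J$. The left containment is clear since $\mathcal{B}' \subseteq I \subseteq J$. For the right containment, every generator $y_i$ lies in $\sqrt{\mathcal{B}'}$, so $\mathcal{B} \subseteq \sqrt{\mathcal{B}'}$. But Proposition \ref{equalRadical} applied to the VSFT data $(J,\mathcal{B},n)$ yields $\sqrt{\mathcal{B}} = \sqrt{J} = J$ (using that $J$ is already radical, being the radical of $I$). Taking radicals of $\mathcal{B} \subseteq \sqrt{\mathcal{B}'}$ then gives $J = \sqrt{\mathcal{B}} \subseteq \sqrt{\mathcal{B}'}$.

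With the chain $\mathcal{B}' \subseteq J \subseteq \sqrt{\mathcal{B}'}$ in hand, Theorem \ref{anyradical} produces a positive integer $m$ with $J^m \subseteq \mathcal{B}'$. Since $\mathcal{B}' \subseteq I$, this gives $J^m \subseteq I$, establishing the first assertion with $k = m$. For the moreover clause, $I \subseteq J$ yields
\[
I^m \subseteq J^m \subseteq \mathcal{B}',
\]
so $I$ is VSFT with data $(I, \mathcal{B}', m)$.

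The main obstacle is purely conceptual rather than technical: one must resist the temptation to use $\mathcal{B}$ itself as the witness for $I$, and instead replace it with the power-ideal $\mathcal{B}'$, whose generators have been raised to exponents large enough to land in $I$. Once that construction is in place, Theorem \ref{anyradical} and Proposition \ref{equalRadical} handle both conclusions simultaneously.
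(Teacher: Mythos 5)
Your proof is correct. The approach differs from the paper's in a modest but genuine way. The paper works directly: since $\mathcal{B}\subseteq J=\sqrt{I}$, the image of $\mathcal{B}$ in $R/I$ is a finitely generated nil ideal, hence nilpotent, giving $\mathcal{B}^m\subseteq I$ for some $m$; then $J^{nm}\subseteq\mathcal{B}^m\subseteq I$ and $I^{nm}\subseteq J^{nm}\subseteq\mathcal{B}^m$, so $I$ is VSFT with explicit data $(I,\mathcal{B}^m,nm)$. You instead build a different finitely generated sub-ideal $\mathcal{B}'$ of $I$ by raising each \emph{generator} of $\mathcal{B}$ to a suitable power, and then outsource the remaining work to Theorem~\ref{anyradical}. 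Both arguments ultimately rest on the fact that a finitely generated nil ideal is nilpotent --- you just invoke it indirectly through Theorem~\ref{anyradical} rather than applying it to $R/I$ on the spot. The paper's version has the small advantage of producing the VSFT data in closed form in terms of the original $(J,\mathcal{B},n)$, while yours is more modular and reuses machinery already in place; the trade-off is that your exponent $m$ is not explicit. One minor stylistic note: the step $\mathcal{B}\subseteq\sqrt{\mathcal{B}'}\Rightarrow J=\sqrt{\mathcal{B}}\subseteq\sqrt{\mathcal{B}'}$ can be shortened, since $J$ is generated (as a radical) by $\mathcal{B}$ and every generator $y_i$ already lies in $\sqrt{\mathcal{B}'}$, but the detour through Proposition~\ref{equalRadical} is perfectly sound.
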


\begin{proof}
We prove the second statement (that $I$ is VSFT) first.  Since $\sqrt{I}=J$, $J/I$ is a nil ideal of $R/I$ and since $\mathcal{B}\subseteq J$ is finitely generated and $\sqrt{B}=J$, the image of $\mathcal{B}$ under the canonical projection $R\longrightarrow R/I$ is a nil (and hence nilpotent) ideal.  Hence there is an $m\in\mathbb{N}$ such that $\mathcal{B}^m\subseteq I$. As $\mathcal{B}^m\subseteq I$ and $J^{nm}\subseteq\mathcal{B}^m$, we have $I^{nm}\subseteq J^{nm}\subseteq\mathcal{B}^m$ and so $I$ is VSFT with data $(I,\mathcal{B}^m, nm)$.

For the first statement, we refer to the above and note that $J^{nm}\subseteq \mathcal{B}^m\subseteq I$.
\end{proof}

\section{Bifurcation of the Properties}

With the results of the previous section at our disposal, we are now in a better position to
construct examples of rings which are SFT but not VSFT.
The first example that we provide is an example of a 1-dimensional quasilocal
domain which is an integral extension of a 1-dimensional Noetherian
valuation domain that is SFT but not VSFT.

\begin{ex}\label{SFTnotVSFTex3FinChar}

We consider the domain 

\[
R=\mathbb{F}_2(y_1^2,y_2^2,y_3^2,\cdots.)[[x^2]][xy_1,xy_2,xy_3,\cdots]
\]

The ideal $I = (x^2,xy_1,xy_2,xy_3,\cdots)$ is the unique maximal ideal of
$R$ (it is easy to see that if $z\notin I$ then $z$ is a unit). Since $R$ is an integral extension of a power series ring over a field, $I$ is the unique nonzero prime ideal of $R$. We consider the
finitely generated ideal $\mathcal{B}= (x^2)\subseteq I$ and note that $I=\sqrt{\mathcal{B}}$. We observe that $(xy_n)^2 = (x^2)(y_n^2) \in (x^2) $
for all $ n \geq 1$, and since $R$ is of characteristic 2, then this extends to sums of the generators of $I$. On the other hand, if $k_1,k_2,\cdots ,k_m$ are distinct positive integers, then $(xy_{k_1})(xy_{k_2})\cdots (xy_{k_m})\notin (x^2)$.
This shows that $I$ is SFT, but Theorem \ref{anyradical} allows us to conclude that $I$ is not VSFT. Since $I$ is the only nonzero prime ideal
of $R$, by Theorem \ref{PrimeSufficient}, $R$ is not VSFT, but is an SFT domain. 

\end{ex}

In the next example, we provide an example of an SFT domain which is not VSFT, in characteristic zero.

\begin{ex}\label{SFTnotVSFTex4Char0}

We begin with the domain: 

\[
T =
\mathbb{Z}_{(2)}[2^{1+\frac{1}{2}},
2^{2+\frac{1}{2^2}},2^{3+\frac{1}{2^3}},\cdots ,2^{n+\frac{1}{2^n}},\cdots]
\]

\noindent and use this to define $R:=T_{\mathfrak{N}}$
where $\mathfrak{N}$ is the maximal ideal of $T$ generated by the set $\{2, 2^{1+\frac{1}{2}},
2^{2+\frac{1}{2^2}},2^{3+\frac{1}{2^3}},\cdots ,2^{n+\frac{1}{2^n}},\cdots\}$. Also note that for all $n$, the element $2^{n+\frac{1}{2^n}}$ is
integral over $\mathbb{Z}_{(2)}$ since $(2^{n+\frac{1}{2^n}})^{2^n}=
2^{n2^n + 1} \in \mathbb{Z}_{(2)}$, and so, since  $\mathbb{Z}_{(2)}$ is 1 dimensional and $R$ is integral over $\mathbb{Z}_{(2)}$, $R$ is also 1-dimensional. It is also easy to see that $R$ is quasilocal (and is of characteristic 0). 

In the computation below, we show that $\mathfrak{M}:=R\mathfrak{N}$ is SFT with
the data $(\mathfrak{M},(2), 2)$, but $\mathfrak{M}$ is not VSFT.

First note that for all $n\geq 1$, $(2^{n+\frac{1}{2^n}})^2 =
(2)(2^n)(2^{(n-1)+\frac{1}{2^{(n-1)}}}) \in (2)$; from this it follows easily from the multinomial formula that the square of any element of $\mathfrak{M}$ is an element of $(2)$. Hence we have established that $\mathfrak{M}$ is SFT with data $(\frak{M}, (2), 2)$.

With regard to the VSFT property, we note that Theorem \ref{anyradical} shows that if $\mathfrak{M}$ is VSFT, then it is VSFT with data $(\mathfrak{M}, (2), k)$ for some positive integer $k$. Consider
the product of $k$ distinct generators:
\[ (2^{n_1+\frac{1}{2^{n_1}}})(2^{n_2+\frac{1}{2^{n_2}}})(2^{n_3+\frac{1}{2^{n_3}}})\cdots(2^{n_k+\frac{1}{2^{n_k}}})\]
which we rewrite as
\[ 2^{n_1+n_2+\cdots+n_k + \frac{1}{2^{n_1}} + \frac{1}{2^{n_2}}+\cdots+\frac{1}{2^{n_k}}},\]
where, without loss of generality, we assume that $n_1 > n_2 >
\cdots > n_k$. 
By way of contradiction, we assume that the product above is an element of $(2)$. If this is the case, then we can write

\[(2^{n_1+\frac{1}{2^{n_1}}})(2^{n_2+\frac{1}{2^{n_2}}})(2^{n_3+\frac{1}{2^{n_3}}})\cdots(2^{n_k+\frac{1}{2^{n_k}}})\]

\[= (2)(2^{n_1+\cdots+n_k+\frac{1}{2^{n_1}}+\cdots+\frac{1}{2^{n_k}}-1}).\]

For the element
\[2^{n_1+\cdots+n_k+\frac{1}{2^{n_1}}+\cdots+\frac{1}{2^{n_k}}-1}\] to be
in $R$, it must be the case that

\[
2^{(n_1+\cdots+n_k+\frac{1}{2^{n_1}}+\cdots+\frac{1}{2^{n_k}}-1)} =
2^{(n'_1+\cdots+n'_s+\frac{1}{2^{n'_1}}+\cdots\frac{1}{2^{n'_s}})}2^M,
\] 

for integers $M$ and $n'_1\geq n'_2\geq\cdots\geq n'_s$. This in turn can be written as 

\[
2^{(n_1+\cdots+n_k+\frac{1}{2^{n_1}}+\cdots+\frac{1}{2^{n_k}}-1)} =
2^{(\frac{1}{2^{m_1}}+\cdots\frac{1}{2^{m_t}})}2^N,
\] 

where $N$ is a positive integer and $m_1>m_2>\cdots >m_t$, and note that $N\geq m_1+m_2+\cdots +m_t$.

Comparing the exponents of $2$, we see that

\[
(\frac{1}{2^{n_1}}+\cdots+\frac{1}{2^{n_k}})-(\frac{1}{2^{m_1}}+\cdots+\frac{1}{2^{m_t}})\in\mathbb{Z}.
\]

Note that $n_1=m_1$ and by induction, we obtain that $k=t$ and $n_i=m_i$ for all $1\leq i\leq k$. From the above, we obtain that 

\[
2^{(n_1+\cdots +n_k-1)}=2^N=2^{(m_1+\cdots +m_t-1)}
\]

which is a contradiction since $N\geq m_1+m_2+\cdots +m_t$.

We have shown that for all $k$ we can find a product of $k$ elements of the radical of $(2)$ that is not in $(2)$. By Theorem \ref{anyradical}, $R$ is not VSFT.

\end{ex}

\section{Flatness and Behavior in Extensions}

Integral extensions are often thought of as ``nice" extensions as many fundamental properties of ring theoretic importance are preserved in integral extensions. However, we will see in the next example that the (V)SFT property does not necessarily behave well in integral extensions. In particular, we will produce an example where the integral closure of a VSFT ring is not even a
SFT ring. It is worth noting that, in general, integral extensions (or even integral closures) of Noetherian rings need not be Noetherian (although the examples are highly nontrivial, the interested reader should see \cite{Nagata1954} for example).

The following example was used in a similar context in~\cite{Co2002}.

\begin{ex}\label{IntClosureNotVSFT}
Let us consider a non-discrete valuation domain $V$ with
value group $\mathbb{Q}$; specifically, we let $A
=\mathbb{F}_2[x;\mathbb{Q}^+_0]$ be the monoid domain with nonnegative rational exponents over the field of $2$ elements, and $V=A_{\mathfrak{M}}$ where $\mathfrak{M}$ is the
maximal ideal of $A$, given by $\mathfrak{M}=
(\{x^{\alpha}\}_{\alpha > 0})$. Let $R:= \mathbb{F}_2 + xV$. $R$ has a unique nonzero prime ideal given by $xV$ and it is easy to see that this ideal is VSFT with data $(xV, (x), 2)$. By Theorem
\ref{PrimeSufficient}, $R$ is a 1-dimensional VSFT domain. However, the
domain $V$ is the integral closure of $R$ and is not even an SFT
domain and, hence, cannot be VSFT.
\end{ex}

This example also reveals the fact that an overring of a VSFT domain
may not even be an SFT domain. However, as the next result shows, under
reasonable restriction, the VSFT property can be extended to an
overring of the domain. Before we proceed to state and prove the
result, we recall the definition of a flat module (extension) and the notion of generalized transforms. Details can be found in
~\cite{ab}.

\begin{defn}
Let $A$ be an R-module. We say that $A$ is a flat $R-$module if the functor $-\otimes_RA$ is exact. Additionally, we say that a ring extension $T$ of $R$ is flat if $T$ is flat as an $R-$module.
\end{defn}

Let $R$ be a domain with quotient field $K$. Following \cite{ab}, we let $\mathfrak{F}$ be a
multiplicatively closed set of ideals in $R$. Let us define
$R_{\mathfrak{F}}$ as the set $\{z \in K | zA \subseteq R$ for some
$ A \in \mathfrak{F}\}$. Moreover, if $T$ is a flat overring of $R$,
then we can choose $\mathfrak{F}$ such that $T = R_{\mathfrak{F}}$ and $AT = T$ for all $A \in
\mathfrak{F}$.

Armed with this tool, we prove the next result.

\begin{thm}\label{flatOverring}
Let $R$ be a VSFT domain. If $T$ is a flat overring of $R$, then $T$
is also VSFT.
\end{thm}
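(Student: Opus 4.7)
The plan is to invoke Theorem~\ref{PrimeSufficient} to reduce the assertion to checking that each prime of $T$ is VSFT, and then to extend the VSFT data along the flat inclusion $R\hookrightarrow T$.

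By Theorem~\ref{PrimeSufficient}, it suffices to fix a prime ideal $\mathfrak{P}\subseteq T$ and verify that it is VSFT. Set $P:=\mathfrak{P}\cap R$, which is a prime of $R$. Since $R$ is VSFT, $P$ is VSFT with some data $(P,\mathcal{B},n)$, where $\mathcal{B}=(b_1,\ldots,b_k)\subseteq P$ is finitely generated and $P^n\subseteq\mathcal{B}$.

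Next I would invoke the structure theorem for flat overrings recalled just before the theorem: writing $T=R_{\mathfrak{F}}$ for a multiplicatively closed family of ideals $\mathfrak{F}$ of $R$ with $AT=T$ for all $A\in\mathfrak{F}$, the standard prime correspondence yields $\mathfrak{P}=PT$ (the primes of $T$ are precisely the expansions $QT$ for primes $Q$ of $R$ not containing any member of $\mathfrak{F}$, and contraction/extension gives a bijection). The ideal $\mathcal{B}T$ of $T$ is then finitely generated (by $b_1,\ldots,b_k$), and since $\mathcal{B}\subseteq P$ we have $\mathcal{B}T\subseteq PT=\mathfrak{P}$. Moreover $(PT)^n=P^n T$ because $T\cdot T=T$, so
\[
\mathfrak{P}^n \;=\; (PT)^n \;=\; P^n T \;\subseteq\; \mathcal{B}T.
\]
Thus $\mathfrak{P}$ is VSFT with data $(\mathfrak{P},\mathcal{B}T,n)$, and a second application of Theorem~\ref{PrimeSufficient} yields that $T$ itself is VSFT.

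The only delicate step is the identification $\mathfrak{P}=PT$ for an arbitrary prime of $T$; this is where flatness is actually used, via the prime correspondence for flat overrings of domains (equivalently, the generalized transform description of \cite{ab}). Once this identification is granted, the rest of the proof is a purely formal transfer of the finitely generated witness $\mathcal{B}$ and the exponent $n$ from $P$ to its expansion in $T$, with no change to the index.
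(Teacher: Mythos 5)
Your proof is correct and a bit slicker than the one in the paper, but it leans on a stronger black‑box fact than the paper chooses to invoke. You use the extension‑contraction identity $\mathfrak{P}=(\mathfrak{P}\cap R)T$ for primes of a flat overring (a standard consequence of Richman's characterization: $T_{\mathfrak{P}}=R_{\mathfrak{P}\cap R}$ for each prime $\mathfrak{P}$ of $T$, hence every ideal of $T$ is extended from $R$). Once that identity is granted, the transfer of the data $(P,\mathcal{B},n)$ to $(\mathfrak{P},\mathcal{B}T,n)$ is the formal computation you give, and it does keep the index $n$ unchanged. The paper's proof, by contrast, only uses the weaker generalized‑transform description $Q=P_{\mathfrak{F}}$ from \cite{ab}: it takes $N$ arbitrary elements $q_1,\dots,q_N\in Q$, finds $A_i\in\mathfrak{F}$ with $q_iA_i\subseteq P$, forms $A=A_1\cdots A_N\in\mathfrak{F}$, and uses $(q_1\cdots q_N)A\subseteq P^N\subseteq\mathcal{B}$ together with $AT=T$ to conclude $q_1\cdots q_N\in\mathcal{B}T$. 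That is more hands‑on but needs less machinery; it also produces the same data $(\mathfrak{P},\mathcal{B}T,N)$. So the two proofs reach the same conclusion via genuinely different intermediate steps: you rely on the ideal‑extension theorem for flat overrings, while the paper rederives just the piece of it that is needed from the transform description. If you keep your version, you should cite explicitly the fact that ideals of a flat overring are extended from the base (e.g., Richman or Arnold--Brewer), since the paper itself only recalls the weaker transform statement.
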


\begin{proof}
We shall show that every prime ideal of the flat
overring $T$ must be VSFT and then by Theorem
\ref{PrimeSufficient} the result will follow.

Let $Q$ be a prime ideal of $T$. Let $P = Q \bigcap R $. By
hypothesis, $P$ is VSFT. So there exist a finitely generated ideal
$\mathcal{B} \subseteq P$ and a positive integer $N$ such that $P^N
\subseteq \mathcal{B}$. Now, by a result proved in
~\cite{ab} (see the proof of Theorem 1.2), we can find a multiplicatively
closed set of ideals $\mathfrak{F}$ in $R$ such that $Q =
P_{\mathfrak{F}}$. Therefore, for $q \in Q$, $qA \subseteq P$ for
some $A \in \mathfrak{F}$. Let $q_1,q_2,\cdots,q_N$ be $N$ arbitrary
elements of $Q$. So there exist $N$ ideals $A_1,A_2,\cdots,A_N$ in
$\mathfrak{F}$ such that $q_1A_1 \subseteq P, q_2A_2 \subseteq
P,\cdots,q_NA_N \subseteq P$. Therefore,
$(q_1q_2\cdots q_N)(A_1A_2\cdots A_N) \subseteq P^N \subseteq \mathcal{B}$.

Since $\mathfrak{F}$ is multiplicatively closed, $A = A_1A_2\cdots A_N
\in \mathfrak{F}$. So, $(q_1q_2\cdots q_N)A \subseteq \mathcal{B}$. Let
us now define a subset $\mathfrak{A}$ of $T$ by $\{z \in T
|(q_1q_2\cdots q_N)z \in \mathcal{B}T\}$ and note that $\mathfrak{A}$ is an
ideal of $T$. 

At this point, we note that $A \subseteq \mathfrak{A}$. Since $A \in
\mathfrak{F}$ and we know that $\mathfrak{F}$ can be so chosen that
$AT = T$ for $A \in \mathfrak{F}$, we conclude that $\mathfrak{A}=
T$ (note that $T = AT \subseteq \mathfrak{A}T = \mathfrak{A}$). Since
$1 \in T = \mathfrak{A}$, $(q_1q_2\cdots q_N)1 \in
\mathcal{B}T$. Note that, $\mathcal{B}T$ is a finitely generated
ideal of $T$ such that $\mathcal{B}T \subseteq Q$ and, any
arbitrary product of $N$ elements from $Q$ is in $\mathcal{B}T$.
Hence, $Q^N \subseteq \mathcal{B}T$ and therefore, $Q$ is VSFT.
\end{proof}

An immediate corollary of the above theorem can be found by
observing the fact that a localization of a domain is a flat
overring of the domain.

\begin{cor}
Let $R$ be a domain and $S$ is a multiplicative set in $R$. If $R$
is VSFT, then so is $R_S$.
\end{cor}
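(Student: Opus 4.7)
The plan is to deduce this corollary directly from Theorem \ref{flatOverring}. The key observation is that for any domain $R$ and any multiplicative subset $S \subseteq R$, the localization $R_S$ sits inside the quotient field $K$ of $R$ (so it is an overring), and it is a flat $R$-module. Both facts are standard: $R_S$ is the localization of $R$ at $S$, hence flat, and since $R$ is a domain, every element of $R_S$ can be written as $r/s$ with $r \in R, s \in S \subseteq R\setminus\{0\}$, so $R_S \subseteq K$.

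With these two observations in hand, the hypothesis of Theorem \ref{flatOverring} is satisfied: $R$ is a VSFT domain and $T := R_S$ is a flat overring of $R$. Applying that theorem yields that $R_S$ is VSFT, which is the statement to be proved.

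There is essentially no obstacle here; the proof is a one-line invocation of the previous theorem once flatness of localization is cited. If I wanted to be slightly more self-contained I could briefly recall that the multiplicatively closed set of ideals corresponding to $R_S$ in the sense of \cite{ab} can be taken to be $\mathfrak{F} = \{(s) : s \in S\}$ (with each $(s)T = T$ since $s$ becomes a unit in $R_S$), but this is more than needed: the corollary follows immediately from Theorem \ref{flatOverring}.
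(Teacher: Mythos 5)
Your proof is correct and is exactly the approach the paper takes: observe that $R_S$ is a flat overring of the domain $R$ and invoke Theorem \ref{flatOverring}. Nothing further is needed.
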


\begin{cor}\label{Pcor}
Let $R$ be a VSFT Pr$\ddot{\text{u}}$fer domain, then every overring of $R$
is VSFT.
\end{cor}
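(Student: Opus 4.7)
The plan is to reduce this corollary directly to Theorem \ref{flatOverring}. Recall that a classical characterization of Pr\"ufer domains (due to Richman, and also appearing in the standard references on multiplicative ideal theory) asserts that an integral domain $R$ is Pr\"ufer if and only if every overring of $R$ is flat as an $R$-module. So the approach is simply: let $T$ be an arbitrary overring of $R$ with $R\subseteq T\subseteq K$, where $K$ is the quotient field of $R$; invoke the Pr\"ufer hypothesis to conclude that $T$ is a flat overring of $R$; then apply Theorem \ref{flatOverring} to conclude that $T$ is VSFT.

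There is no real obstacle here beyond citing the Pr\"ufer-overring-flatness equivalence. One might wish to mention that the case of localizations is already handled by the previous corollary, but the Pr\"ufer hypothesis gives the stronger conclusion for \emph{every} overring, not only those of the form $R_S$. I would also note, for completeness, that this corollary stands in sharp contrast with Example \ref{IntClosureNotVSFT}, which shows that without the Pr\"ufer (or flatness) hypothesis even the integral closure of a VSFT domain can fail to be SFT; the Pr\"ufer assumption on $R$ is precisely what rules out that pathology, because in the Pr\"ufer setting integral overrings coincide with flat overrings. This is a short proof and the work has essentially all been done in Theorem \ref{flatOverring}.
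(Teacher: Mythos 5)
Your proof is correct and matches the paper's argument exactly: cite the standard fact that every overring of a Pr\"ufer domain is flat, then apply Theorem~\ref{flatOverring}. The additional remarks about localizations and Example~\ref{IntClosureNotVSFT} are accurate context but not part of the proof itself.
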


\begin{proof}
It is well-known that every overring of a
Pr$\ddot{\text{u}}$fer domain is a flat overring. Hence by Theorem
\ref{flatOverring}, we know that every overring is also VSFT.
\end{proof}

It is worth noting that Pr\"{u}fer domains need not be VSFT, but every overring of a Pr\"{u}fer domain is a Pr\"{u}fer domain. So suppose one considers a chain of distinct Pr\"{u}fer domains $\{D_\alpha\}_{\alpha\in\Lambda}$ all contained in the same quotient field, and we totally order the indexing set $\Lambda$ by declaring that $\alpha\leq \beta$ if and only if $D_\alpha\subseteq D_\beta$. Corollary \ref{Pcor} shows that if there is an $\alpha\in\Lambda$ such that $D_\alpha$ is VSFT, then $D_\beta$ is VSFT for all $\beta\in\Lambda$ such that $\alpha\leq \beta$.

We also record VSFT behavior in homomorphic images. As is the case for the Noetherian property and the SFT property, the VSFT property is preserved in  homomorphic images.

\begin{thm}
The homomorphic image of a VSFT ring is VSFT.
\end{thm}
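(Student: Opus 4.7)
The plan is to verify the VSFT property directly on an arbitrary ideal of the image, using the order-preserving correspondence between ideals of the quotient and ideals of the source containing the kernel. Let $\varphi\colon R\twoheadrightarrow S$ be a surjective ring homomorphism with $R$ a VSFT ring, and fix an arbitrary ideal $J\subseteq S$. The first step is to pull back: the preimage $I:=\varphi^{-1}(J)$ is an ideal of $R$ and therefore VSFT by hypothesis, yielding a finitely generated ideal $\mathcal{B}\subseteq I$ and a positive integer $n$ with $I^n\subseteq \mathcal{B}$.

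The second step is to push this data forward. Let $\mathcal{B}':=\varphi(\mathcal{B})$. Since $\varphi$ sends a finite generating set of $\mathcal{B}$ to a finite generating set of $\mathcal{B}'$, the ideal $\mathcal{B}'$ is finitely generated in $S$, and surjectivity of $\varphi$ gives $\varphi(I)=J$, so $\mathcal{B}'\subseteq J$. The elementary identity $\varphi(I)^n=\varphi(I^n)$, which follows by unwinding the definition of powers of ideals as finite sums of $n$-fold products and using that $\varphi$ respects the ring operations, then yields
\[
J^n \;=\; \varphi(I)^n \;=\; \varphi(I^n) \;\subseteq\; \varphi(\mathcal{B}) \;=\; \mathcal{B}'.
\]
Thus $J$ is VSFT with data $(J,\mathcal{B}',n)$.

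Since $J\subseteq S$ was arbitrary, every ideal of $S$ is VSFT, so $S$ is VSFT. This argument proceeds directly at the level of ideals and sidesteps any need to invoke Theorem \ref{PrimeSufficient}. The only point requiring care is the commutation of $\varphi$ with powers of ideals and with finite generation, neither of which presents a real obstacle; consequently the proof is expected to be short and formal.
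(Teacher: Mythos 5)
Your proof is correct. The core mechanism is the same as the paper's: pull the ideal back along the surjection, invoke the VSFT hypothesis on the preimage, and push the witness data forward. The difference is organizational. The paper first invokes Theorem~\ref{PrimeSufficient} to reduce the problem to prime ideals of $S$, then verifies the VSFT condition element-by-element, taking $N$ arbitrary elements of the prime $P$, lifting each, multiplying in $R$, and mapping back down. You instead work with an arbitrary ideal $J\subseteq S$ directly and argue at the level of ideal powers via the identity $\varphi(I)^n=\varphi(I^n)$ for a surjection $\varphi$. Your route is shorter and more self-contained: it does not depend on the prime-sufficiency theorem at all, and the identity $\varphi(I)^n=\varphi(I^n)$ together with $\varphi(\varphi^{-1}(J))=J$ packages what the paper does by hand with generators. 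What the paper's route buys is consistency of method with the other results in that section, several of which genuinely do need the reduction to primes; here that reduction is unnecessary overhead, and your direct argument is arguably the cleaner one.
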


\begin{proof}
Let $R$ be a VSFT ring and $S$ be its homomorphic
image under the homomorphism $\phi$. By Theorem
\ref{PrimeSufficient}, we need to show that every prime ideal in $S$
is VSFT.

Let $P$ be a prime ideal of $S$. Then it is well known fact that
$\phi^{-1}(P)$ is a prime ideal of $R$. By hypothesis,
$\phi^{-1}(P)$ is a VSFT ideal. So there exists a finitely generated
ideal $\mathcal{B}\subseteq\phi^{-1}(P)$ of $R$ and a positive integer $N$ such that
$(\phi^{-1}(P))^N \subseteq \mathcal{B}$. Let generators of
$\mathcal{B}$ be given by $r_1,r_2,\cdots,r_k$. Now consider $N$
arbitrary elements $p_1, p_2,\cdots,p_N $ from $P$. There exists $q_i
\in \phi^{-1}(P)$ such that $p_i = \phi(q_i), $ for all $ i$. From
the VSFT data of $\phi^{-1}(P)$, we conclude that $q_1q_2\cdots q_N \in
\mathcal{B}$. Hence, $q_1q_2\cdots q_N = s_1r_1 + s_2r_2 + \cdots +
s_kr_k$ for $s_i \in R, $ for all $ i$. Therefore,
$\phi(q_1q_2\cdots q_N) =\phi( s_1r_1 + s_2r_2 + \cdots + s_kr_k)$ or
$p_1p_2\cdots p_N =
\phi(s_1)\phi(r_1)+\phi(s_2)\phi(r_2)+\cdots+\phi(s_k)\phi(r_k)$. So
there exists a finitely generated ideal $\mathcal{A}:=\phi(\mathcal{B})= (\phi(r_1),\phi(r_2),\cdots,\phi(r_k))$ in $S$, which
is contained in $P$ such that $P^N \subseteq \mathcal{A}$. Hence, $P$ is
VSFT, and this completes the proof. 
\end{proof}

For the sake of completeness we record some useful properties of VSFT rings. These are all properties enjoyed by SFT rings (see \cite{ar1973b}), so we list these properties without proof.

\begin{thm}
If $R$ is an SFT ring, then $R$ has the following properties.
\begin{enumerate}
\item $R$ satisfies the ascending chain condition on radical ideals.
\item Any ideal $I\subseteq R$ has only finitely many primes minimal over it.
\item Any radical ideal of $R$ is the intersection of finitely many prime ideals.
\item Any radical ideal is the radical of a finitely generated ideal.
\end{enumerate}
\end{thm}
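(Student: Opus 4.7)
The plan is to derive the four conclusions in the order (4), (1), (2), (3), with each item feeding into the next.

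Property (4) is essentially Proposition \ref{equalRadical} applied in the radical case: if $I=\sqrt{I}$ is SFT with data $(I,\mathcal{B},n)$, then $\mathcal{B}\subseteq I$ gives $\sqrt{\mathcal{B}}\subseteq I$, while $x^n\in\mathcal{B}$ for every $x\in I$ gives the reverse inclusion. Hence $I=\sqrt{\mathcal{B}}$ is the radical of a finitely generated ideal.

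Given (4), property (1) is a short argument. For an ascending chain $I_1\subseteq I_2\subseteq\cdots$ of radical ideals, set $I=\bigcup_j I_j$; this union is radical since $x^k\in I$ forces $x^k\in I_j$ for some $j$, and hence $x\in\sqrt{I_j}=I_j$. By (4), $I=\sqrt{\mathcal{B}}$ for some finitely generated $\mathcal{B}=(b_1,\ldots,b_s)$. Each $b_i\in I_{j_i}$, and taking $N=\max_i j_i$ gives $\mathcal{B}\subseteq I_N$, so $I=\sqrt{\mathcal{B}}\subseteq\sqrt{I_N}=I_N$ and the chain stabilizes at step $N$.

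For (2), since the minimal primes over $I$ depend only on $\sqrt{I}$, I may restrict attention to radical ideals. Suppose toward contradiction that the collection $\Gamma$ of radical ideals with infinitely many minimal primes is nonempty; by (1), $\Gamma$ has a maximal element $J$. Note $J$ cannot be prime, so choose $x,y\notin J$ with $xy\in J$. Then $\sqrt{J+(x)}$ and $\sqrt{J+(y)}$ properly contain $J$ and, by maximality, each has only finitely many minimal primes. The key claim is that every minimal prime $\p$ over $J$ is minimal over $J+(x)$ or over $J+(y)$: since $xy\in\p$, we may assume $x\in\p$, so $\p\supseteq J+(x)$; picking $\p'$ minimal over $J+(x)$ with $\p'\subseteq\p$, we see $\p'$ contains $J$ and hence contains some $\p''$ minimal over $J$. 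The chain $\p''\subseteq\p'\subseteq\p$ with $\p,\p''$ both minimal over $J$ forces $\p=\p''=\p'$, so $\p\in\mathrm{Min}(J+(x))$. Thus $\mathrm{Min}(J)\subseteq\mathrm{Min}(J+(x))\cup\mathrm{Min}(J+(y))$ is finite, contradicting $J\in\Gamma$.

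Property (3) is then immediate: any radical ideal is the intersection of the minimal primes above it, and (2) bounds that intersection to be finite. The main obstacle is the maximality-plus-splitting argument in (2); once that is set up, everything else reduces to routine bookkeeping with SFT data and radicals.
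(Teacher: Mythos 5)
Your proof is correct. For this theorem the paper gives no argument at all---it simply records the four properties ``without proof'' and cites Arnold's work (the reference \texttt{ar1973b}), so there is no proof in the paper to compare against; what you have supplied is, in effect, the omitted argument, and it follows the standard route.

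A few remarks on the details, all of which check out. Your item (4) is exactly Proposition~\ref{equalRadical} specialized to the radical case, and it applies to the union $I$ in item (1) because $R$ being an SFT \emph{ring} guarantees that the (radical) ideal $I$ is itself SFT. In item (2) the reduction to radical ideals is legitimate since $\mathrm{Min}(I)=\mathrm{Min}(\sqrt I)$, and the passage from ACC (item (1)) to the existence of a maximal element of the nonempty family $\Gamma$ is the usual Noetherian-induction step. The key claim that $\mathrm{Min}(J)\subseteq\mathrm{Min}(J+(x))\cup\mathrm{Min}(J+(y))$ is argued correctly; in fact it can be shortened slightly, since once you have $\p'\subseteq\p$ with $\p'\supseteq J$ and $\p$ minimal over $J$, minimality of $\p$ already forces $\p'=\p$ without introducing $\p''$. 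Item (3) then follows from the general fact that a radical ideal is the intersection of the primes (equivalently, the minimal primes) above it. The ordering (4)$\Rightarrow$(1)$\Rightarrow$(2)$\Rightarrow$(3) is the natural one and, to the best of my knowledge, is essentially the chain of implications in Arnold's original treatment.
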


We now consider the extension of (V)SFT ideals to ring extensions. In particular, the next theorem shows that if a VSFT ideal survives in an extension ring, then the
extended ideal also satisfies the VSFT property. An analogous result is true for SFT rings, but the proof is more subtle and will be presented separately.

\begin{thm}\label{vsftsurvives}
Let $R$ be a ring contained in another ring $T$. Let $I$ be a VSFT
ideal of $R$ with data $(I, \mathcal{B}, N)$, which survives in
$T$. Then $IT$ is also VSFT with data $(IT, \mathcal{B}T, N)$.
\end{thm}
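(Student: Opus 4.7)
The plan is to verify the proposed VSFT data for $IT$ directly from the hypothesis $I^N \subseteq \mathcal{B}$. Two things need to be established: that $\mathcal{B}T$ is a finitely generated ideal of $T$ with $\mathcal{B}T \subseteq IT$, and that $(IT)^N \subseteq \mathcal{B}T$. The first point is immediate: if $\mathcal{B} = (b_1, b_2, \ldots, b_k)$ as an ideal of $R$, then the same elements $b_1, \ldots, b_k$ generate $\mathcal{B}T$ as an ideal of $T$, and the containment $\mathcal{B} \subseteq I$ gives $\mathcal{B}T \subseteq IT$ upon extension.

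For the second point, I would establish the identity $(IT)^N = I^N T$. The inclusion $I^N T \subseteq (IT)^N$ follows by writing a typical generator as $i_1 i_2 \cdots i_N \cdot t = (i_1)(i_2) \cdots (i_{N-1})(i_N t)$, a product of $N$ elements of $IT$. For the reverse inclusion, an arbitrary generator of $(IT)^N$ has the form $(i_1 t_1)(i_2 t_2) \cdots (i_N t_N) = (i_1 i_2 \cdots i_N)(t_1 t_2 \cdots t_N) \in I^N T$. Combining this identity with the hypothesis $I^N \subseteq \mathcal{B}$ yields
\[
(IT)^N \;=\; I^N T \;\subseteq\; \mathcal{B} T,
\]
which is exactly the VSFT condition with data $(IT, \mathcal{B}T, N)$.

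There is no real obstacle in this argument, and that is the point of the remark following the theorem. The VSFT condition is expressed purely in terms of ideal powers, and ideal multiplication commutes with extension in the sense that $(IT)(JT) = (IJ)T$; this makes the proof a one-line calculation. The role of the ``survives'' hypothesis is only to ensure that $IT$ is a proper ideal so that the statement is non-vacuous; it plays no role in the containment argument itself. By contrast, the analogous SFT statement is more delicate precisely because the SFT condition is elementwise: an arbitrary element of $IT$ has the form $\sum_j i_j t_j$, and expanding $(\sum_j i_j t_j)^n$ by the multinomial theorem produces mixed terms in which the $t_j$'s are entangled with the $i_j$'s in a way that cannot be rearranged into a single $N$th power of an element of $I$, so the elementwise argument requires a genuinely different approach and is therefore deferred to a separate treatment.
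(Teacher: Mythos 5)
Your proof is correct and takes essentially the same approach as the paper: both establish $(IT)^N \subseteq \mathcal{B}T$, the paper by expanding products of elements of $IT$ term by term, and you by packaging that same computation into the cleaner identity $(IT)^N = I^NT$. Your closing remarks about why the SFT analogue is genuinely harder (multinomial cross-terms) also match the paper's subsequent treatment in Theorem~\ref{SFText}.
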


\begin{proof} 
By hypothesis $\mathcal{B}$ is finitely generated and
so is the extension $\mathcal{B}T$. Also, we note that
$\mathcal{B}T \subseteq IT$. A typical element of $IT$ looks like
$i_1t_1 + i_2t_2 + i_3t_3 + \cdots + i_nt_n$, where $i_m \in I$ and $t_m \in T, $ for all $ m$. Let $s_1, s_2,\cdots,s_N$ be
$N$ arbitrary elements from $IT$. Then $s_1 = i_{11}t_{11} +
i_{12}t_{12} + \cdots + i_{1n_1}t_{1n_1}$, $s_2 = i_{21}t_{21} +
i_{22}t_{22} + \cdots + i_{2n_2}t_{2n_2},\cdots, s_N = i_{N1}t_{N1} +
i_{N2}t_{N2} + \cdots + i_{Nn_N}t_{Nn_N}$. A typical term of the
product $s_1s_2\cdots s_N$ is given by
$i_{1k_1}i_{2k_2}\cdots i_{Nk_N}t_{1k_1}t_{2k_2}\cdots t_{Nk_N}$, where
$k_1,k_2,\cdots,k_N$ are appropriate integers denoting the terms from
each element and $j \leq k_j \leq n_j, $ for all $ j$ such that $1
\leq j \leq N$. By hypothesis, $I$ is VSFT with data
$(I,\mathcal{B}, N)$. So $I^N \subseteq \mathcal{B}$, and hence,
$i_{1k_1}i_{2k_2}\cdots i_{Nk_N} \in \mathcal{B}$ which implies
$i_{1k_1}t_{1k_1}i_{2k_2}t_{2k_2}\cdots i_{Nk_N}t_{Nk_N} \in
\mathcal{B}T$. Each term in the product is in $\mathcal{B}T$ and
so is their sum. Hence, $s_1s_2\cdots s_N \in \mathcal{B}T$. Thus
$(IT)^N \subseteq \mathcal{B}T$ and hence $IT$ is VSFT.
\end{proof}

A tool that will be useful for the SFT case is the following theorem (Theorem 2.1 from ~\cite{coco}).

\begin{thm}\label{StrongConvergence}
Let $\Gamma$ be an SFT ideal of a ring $R$ with data $(\Gamma,
B, N)$. If $\{a_i\}_{i=1}^N$ is a collection of elements of
$\Gamma$, then
\[\sum_{k_1+k_2+\cdots +k_m=N}(\frac{N!}{k_1!k_2!\cdots k_m!})a_1^{k_1}a_2^{k_2}\cdots a_m^{k_m} \in B\]

where $n_j \geq 1, $ for all $ j$. In particular, $N!a_1a_2\cdots a_N
\in B$.
\end{thm}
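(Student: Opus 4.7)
The plan is to reduce both assertions to the defining SFT condition of $(\Gamma,B,N)$ via the multinomial theorem, and to handle the ``in particular'' clause by a polarization trick.

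For the displayed sum, the key observation is that $a_1+a_2+\cdots+a_m\in\Gamma$, because $\Gamma$ is an ideal. By the definition of SFT data $(\Gamma,B,N)$, this immediately gives $(a_1+\cdots+a_m)^N\in B$. Since $R$ is commutative, the multinomial theorem (which holds in any commutative ring) expands this element as
\[
(a_1+\cdots+a_m)^N \;=\; \sum_{k_1+\cdots+k_m=N}\frac{N!}{k_1!\,k_2!\cdots k_m!}\,a_1^{k_1}a_2^{k_2}\cdots a_m^{k_m},
\]
and hence the entire right-hand side lies in $B$, which is precisely the stated identity.

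For the ``in particular'' conclusion $N!\,a_1a_2\cdots a_N\in B$, one cannot simply isolate a single multinomial term in the preceding sum, so I would instead apply the SFT hypothesis to every partial sum. For each subset $S\subseteq\{1,2,\ldots,N\}$ put $\sigma_S:=\sum_{i\in S}a_i\in\Gamma$; then $\sigma_S^N\in B$ by the SFT property. The polarization identity
\[
\sum_{S\subseteq\{1,\ldots,N\}}(-1)^{N-|S|}\,\sigma_S^N \;=\; N!\,a_1a_2\cdots a_N
\]
then gives $N!\,a_1\cdots a_N\in B$ at once, because $B$ is closed under $\mathbb{Z}$-linear combinations.

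The only step requiring real verification is the polarization identity, which is purely combinatorial. Expanding each $\sigma_S^N$ by the multinomial theorem and exchanging the order of summation, the coefficient of a monomial $a_1^{k_1}\cdots a_N^{k_N}$ with support $T=\{i:k_i>0\}$ equals $\binom{N}{k_1,\ldots,k_N}\sum_{S\supseteq T}(-1)^{N-|S|}$. The inner sum rewrites as $(-1)^{N-|T|}(1-1)^{N-|T|}$, which vanishes unless $|T|=N$; coupled with $\sum k_i=N$, this forces $k_1=\cdots=k_N=1$, leaving exactly the single surviving term $N!\,a_1\cdots a_N$. Once this combinatorial identity is in hand, the theorem follows immediately from the SFT hypothesis, so this is really the only substantive point in the argument.
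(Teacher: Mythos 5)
The paper does not actually prove this result: it is quoted verbatim as Theorem~2.1 of the cited reference \cite{coco}, so there is no in-paper proof to compare against, and your argument must be judged on its own merits.

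Your treatment of the ``in particular'' clause is correct, and the polarization identity is in fact the right tool for the \emph{entire} theorem: each $\sigma_S=\sum_{i\in S}a_i$ lies in $\Gamma$, so $\sigma_S^N\in B$ by the SFT data, $B$ is closed under $\mathbb{Z}$-linear combinations, and the inclusion--exclusion computation of the coefficient of $a_1^{k_1}\cdots a_N^{k_N}$ you carry out (the inner sum collapsing to $(-1)^{N-|T|}(1-1)^{N-|T|}$) is correct and isolates $N!\,a_1\cdots a_N$.

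Your argument for the displayed sum, however, has a gap. The side condition ``$n_j\geq 1$ for all $j$'' is evidently a typo for ``$k_j\geq 1$,'' so the displayed sum runs only over compositions of $N$ into $m$ \emph{strictly positive} parts, not over the full multinomial expansion. (This is also the only reading under which the ``in particular'' clause is a genuine specialization: with $m=N$ and all $k_j\geq 1$ summing to $N$, the unique term is $k_1=\cdots=k_N=1$.) The multinomial theorem shows that the \emph{full} expansion of $(a_1+\cdots+a_m)^N$, including terms with some $k_j=0$, lies in $B$; it does not follow that the proper sub-sum over positive compositions lies in $B$, since an ideal is closed under linear combinations but one cannot split a single element of an ideal into summands each lying in the ideal. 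You yourself observe exactly this phenomenon in the ``in particular'' case (``one cannot simply isolate a single multinomial term''), but the same objection applies to the main display.

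The repair is simply to run the polarization over subsets of $\{1,\ldots,m\}$ rather than $\{1,\ldots,N\}$: setting $\sigma_S=\sum_{i\in S}a_i$ for $S\subseteq\{1,\ldots,m\}$, each $\sigma_S^N\in B$, and the same inclusion--exclusion computation gives
\[
\sum_{S\subseteq\{1,\ldots,m\}}(-1)^{\,m-|S|}\,\sigma_S^N
\;=\;
\sum_{\substack{k_1+\cdots+k_m=N\\ k_j\geq 1}}\frac{N!}{k_1!\cdots k_m!}\,a_1^{k_1}\cdots a_m^{k_m}\in B,
\]
which is precisely the claimed element, and taking $m=N$ recovers the ``in particular'' clause as a special case. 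So your technique is the right one; it just needs to be applied to the first assertion as well, in place of the bare multinomial expansion.
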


To proceed, we need an additional technical result. In the sequel, the notation $\lfloor x\rfloor$ denotes the floor function. If $p$ is a positive prime integer, we define $v_p:\mathbb{N}\longrightarrow\mathbb{N}_0$ to be the $p-$adic valuation, and we define $f_p:\mathbb{R}^+\longrightarrow\mathbb{Z}$ by

\[
f_p(m)=\sum_{k\geq 1}\lfloor \frac{m}{p^k}\rfloor.
\]

The utility of the functions $f_p$ and $v_p$ lies in the following well-known result that can be found in a number of places in the literature (for example \cite{MR242767}).

\begin{prop}\label{legendre}
\emph{(Legendre)}
For all $n\in\mathbb{N}$, $v_p(n!)=\sum_{k\geq 1}\lfloor\frac{n}{p^k}\rfloor=f_p(n)$.
\end{prop}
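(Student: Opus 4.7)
The plan is to use the standard combinatorial proof of Legendre's formula, which relies on the additivity of the $p$-adic valuation on a product together with a sum-swap.

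First I would write $v_p(n!) = \sum_{m=1}^{n} v_p(m)$, using the fact that $v_p$ is a valuation (so it is additive on products). Next I would rewrite each $v_p(m)$ as a count: $v_p(m) = \#\{k \geq 1 : p^k \mid m\} = \sum_{k \geq 1} \mathbf{1}_{\{p^k \mid m\}}$, noting that this sum is finite since $p^k > m$ for all sufficiently large $k$. Substituting into the expression for $v_p(n!)$ gives
$$v_p(n!) = \sum_{m=1}^{n} \sum_{k \geq 1} \mathbf{1}_{\{p^k \mid m\}}.$$

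Then I would interchange the order of summation (both sums have nonnegative terms, so this is unconditionally valid) to obtain
$$v_p(n!) = \sum_{k \geq 1} \#\{m : 1 \leq m \leq n \text{ and } p^k \mid m\}.$$
The inner count is precisely the number of positive multiples of $p^k$ not exceeding $n$, which equals $\lfloor n/p^k \rfloor$. This yields $v_p(n!) = \sum_{k \geq 1}\lfloor n/p^k \rfloor = f_p(n)$, completing the proof.

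There is no real obstacle here; the result is classical. The only thing to be careful about is justifying the sum-swap and pointing out that all the relevant sums have only finitely many nonzero terms (since $\lfloor n/p^k \rfloor = 0$ once $p^k > n$). An equally clean alternative would be induction on $n$, using the identity $v_p((n+1)!) = v_p(n!) + v_p(n+1)$ together with the telescoping identity $\lfloor (n+1)/p^k \rfloor - \lfloor n/p^k \rfloor = \mathbf{1}_{\{p^k \mid n+1\}}$, but the double-counting argument above is the shortest route.
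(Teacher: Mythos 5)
Your proof is correct and is the standard double-counting argument for Legendre's formula. Note, however, that the paper does not actually give a proof of this proposition at all: it is stated as a classical fact with a citation to the literature (the paper's own remark says it ``can be found in a number of places in the literature''), so there is no proof in the paper to compare against. Your argument fills that gap cleanly, and the sum-swap justification and finiteness remark are exactly the points that deserve a word of care.
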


Additionally, we introduce the following proposition designed to deal with our SFT extension theorem.

\begin{prop}\label{floor}
For all real numbers $N> M\geq a_1\geq a_2\geq \cdots\geq a_m>0$ such that

\[
a_1+a_2+\cdots +a_m\leq NM,
\] 

\noindent we have

\[
f_p(NM)\geq f_p(N)+f_p(a_1)+f_p(a_2)+\cdots +f_p(a_m).
\] 
\end{prop}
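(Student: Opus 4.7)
My plan is to reduce the proposition to an integer factorial-divisibility statement and to prove the latter by induction. First, the identity $\lfloor x/p^k \rfloor = \lfloor \lfloor x \rfloor / p^k \rfloor$ (valid for every positive integer $p^k$) implies $f_p(x) = f_p(\lfloor x \rfloor)$, so I may replace each of $N$, $M$, and the $a_i$ by its floor. After this replacement the hypotheses $a_i \leq M$ and $\sum a_i \leq NM$ remain valid (the latter because $\sum \lfloor a_i \rfloor \leq \sum a_i$ is already an integer bound), and I may therefore assume that $N$, $M$, and all $a_i$ are positive integers. In this setting Legendre's formula (Proposition~\ref{legendre}) gives $f_p(n) = v_p(n!)$, so the desired inequality, taken over every prime $p$, is equivalent to the single divisibility
\[
N! \cdot a_1!\, a_2! \cdots a_m! \;\bigm|\; (NM)!.
\]

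By padding $(a_1, \ldots, a_m)$ with copies of $1$ (each $\leq M$, each contributing $1!$ to the product), I may further assume $\sum a_i = NM$ exactly; then $(NM)!/\prod a_i!$ is the multinomial coefficient $\binom{NM}{a_1, \ldots, a_m}$, a positive integer, and the task reduces to proving $N! \mid \binom{NM}{a_1, \ldots, a_m}$. I would prove this by induction on $N$. If some $a_i$ equals $M$, then the factorization
\[
\frac{(NM)!}{N!\, a_1! \cdots a_m!} \;=\; \binom{NM-1}{M-1} \cdot \frac{((N-1)M)!}{(N-1)! \prod_{j \ne i} a_j!},
\]
obtained from the identity $\binom{NM}{M}/N = \binom{NM-1}{M-1}$, reduces the problem to the inductive hypothesis applied to the tuple $\{a_j\}_{j \ne i}$, which sums to $(N-1)M$ and whose entries are still $\leq M$. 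If instead some pair $a_i, a_j$ satisfies $a_i + a_j \leq M$, I merge them into a single term $a_i + a_j$; this merging step is justified by the integrality of $\binom{a_i + a_j}{a_i}$, so the divisibility for the merged tuple implies it for the original.

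The principal obstacle I anticipate is the ``irreducible'' configuration in which no $a_i$ equals $M$ and no pair of $a_i$'s sums to at most $M$; in that regime every $a_i$ lies in $(M/2, M)$ and consequently $N < m < 2N$. Here I plan to apply Kummer's theorem, which identifies $v_p\!\bigl(\binom{NM}{a_1, \ldots, a_m}\bigr)$ with the total number of base-$p$ carries in the addition $a_1 + a_2 + \cdots + a_m = NM$, and to combine it with a digit-sum bookkeeping argument (exploiting $a_i \leq M < N$ together with $\sum a_i = NM$ and $m < 2N$) to show that the carry count in this tight regime is at least $f_p(N) = v_p(N!)$, completing the divisibility and hence the proposition.
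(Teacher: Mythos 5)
Your proposal follows a genuinely different route from the paper's. The paper proves the inequality for $f_p$ directly by induction on the integer $k$ with $p^k\leq M<p^{k+1}$, using only the identity $f_p(x)=\lfloor x/p\rfloor+f_p(x/p)$ together with the subadditivity $\lfloor\sum x_i\rfloor\geq\sum\lfloor x_i\rfloor$; this yields a short, uniform argument valid for all real inputs with no case analysis. You instead reduce to integer arguments, recast the inequality via Legendre's formula as the divisibility $N!\,a_1!\cdots a_m!\mid (NM)!$ (which is essentially the content of Corollary~\ref{ala}, the form the paper actually uses downstream), and attack it with multinomial-coefficient identities. That reformulation is appealing, but as written the argument is not complete.

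The decisive gap is the one you flag yourself: the ``irreducible'' regime where no $a_i$ equals $M$ and no pair sums to at most $M$. You propose to close it via Kummer's theorem and a ``digit-sum bookkeeping argument,'' but no such argument is given, and it is far from routine to show that the carry count in that tight regime is at least $v_p(N!)$; as it stands this is a plan, not a proof. Two subsidiary points also need repair. First, replacing $N$, $M$, $a_i$ by their floors does not automatically preserve the hypothesis $\sum a_i\leq NM$: one does get $\sum\lfloor a_i\rfloor\leq\lfloor NM\rfloor$, but after the replacement the relevant bound is $\lfloor N\rfloor\lfloor M\rfloor$, which can be strictly smaller than $\lfloor NM\rfloor$ (e.g.\ $N=M=2.5$), so you must keep $\lfloor NM\rfloor$ as the upper argument rather than $\lfloor N\rfloor\lfloor M\rfloor$, and then the multinomial framing over $NM$ no longer lines up cleanly. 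Second, the inductive step using $\binom{NM}{M}/N=\binom{NM-1}{M-1}$ passes from $(N,M)$ to $(N-1,M)$; when $N=M+1$ this violates the hypothesis $N>M$, so the boundary case $N=M$ would have to be added to the statement and handled separately. The paper's valuation-level induction avoids all of these issues.
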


\begin{proof}
Note that there is a nonnegative integer $k$ such that $p^k\leq M<p^{k+1}$; we proceed by induction on $k$. 

For the base case, we let $k=0$ and note that this means that $f_p(a_i)=0$ for all $i$. Hence we have 

\[
f_p(NM)\geq f_p(N)\geq f_p(N)+f_p(a_1)+f_p(a_2)+\cdots +f_p(a_m),
\]

\noindent and this establishes the base case.

Inductively, we assume that for all $k\leq n$, if $p^k\leq M<p^{k+1}$ and 

\[
a_1+a_2+\cdots +a_m\leq NM
\]

\noindent then we have that 

\[
f_p(NM)\geq f_p(N)+f_p(a_1)+f_p(a_2)+\cdots +f_p(a_m).
\]

Now suppose that $p^{n+1}\leq M<p^{n+2}$ and $a_1+a_2+\cdots +a_m\leq NM$. Note that we have $\frac{a_1}{p}+\frac{a_2}{p}+\cdots +\frac{a_m}{p}\leq \frac{NM}{p}$. Hence

\[
\lfloor\frac{NM}{p}\rfloor\geq \lfloor \frac{a_1}{p}+\frac{a_2}{p}+\cdots +\frac{a_m}{p} \rfloor\geq \lfloor\frac{a_1}{p}\rfloor+\lfloor\frac{a_2}{p}\rfloor+\cdots +\lfloor\frac{a_m}{p}\rfloor
\]

We also observe that $f_p(x)=\lfloor\frac{x}{p}\rfloor +f_p(\frac{x}{p})$.

Since $p^n\leq\frac{M}{p}< p^{n+1}$, we apply the inductive hypothesis to obtain

\[
f_p(NM)=\lfloor\frac{NM}{p}\rfloor+f_p(\frac{NM}{p})\geq\sum_{i=1}^m\lfloor\frac{a_i}{p}\rfloor+f_p(N)+\sum_{i=1}^m f_p(\frac{a_i}{p})\geq
f_p(N)+\sum_{i=1}^mf_p(a_i),
\]

\noindent and this completes the proof.
\end{proof}

\begin{cor}\label{ala}
If $k_1+k_2+\cdots +k_m=NM$ and each $k_i<N$ then $N!$ divides $\frac{(NM)!}{k_1!k_2!\cdots k_m!}$.
\end{cor}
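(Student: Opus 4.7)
The plan is to reduce the divisibility, prime by prime, to the inequality of $f_p$-values guaranteed by Proposition~\ref{floor}, with Legendre's formula serving as the bridge.

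Fix an arbitrary prime $p$. By Proposition~\ref{legendre} (Legendre), $v_p(n!) = f_p(n)$, so the claim that $N!$ divides $\frac{(NM)!}{k_1!\cdots k_m!}$ is equivalent to the inequality
\[
f_p(N) + f_p(k_1) + f_p(k_2) + \cdots + f_p(k_m) \leq f_p(NM)
\]
holding for every prime $p$. After discarding any indices with $k_i = 0$ (which contribute nothing to either side) and reindexing so that $k_1 \geq k_2 \geq \cdots \geq k_m > 0$, one invokes Proposition~\ref{floor} with $a_i = k_i$: the constraint $\sum_i a_i \leq NM$ holds with equality since $\sum_i k_i = NM$, and the proposition's conclusion is precisely the displayed inequality. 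Because $p$ was arbitrary, the divisibility follows.

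The main obstacle in this plan is the bookkeeping that matches Proposition~\ref{floor}'s size hypothesis $M \geq a_1$ to the present hypothesis $k_i < N$: the proposition asks that each $a_i$ not exceed the smaller of the two outer factors, whereas the corollary's bound is phrased against $N$. One handles this by interchanging the roles of $N$ and $M$ in the invocation of Proposition~\ref{floor} whenever $M \geq N$ in the ambient context, and then appealing to the monotonicity of $f_p$ (so $f_p(M) \geq f_p(N)$) to recast the conclusion in the form required. Modulo this index alignment, the argument is a one-line consequence of Legendre's formula combined with Proposition~\ref{floor}.
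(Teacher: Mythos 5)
Your route is the paper's route: Legendre's formula to convert the divisibility into the $f_p$-inequality, and then Proposition~\ref{floor}. The paper compresses this to one line and, unlike you, does not notice the mismatch between the hypotheses. You are right to flag it: Proposition~\ref{floor} requires each $a_i \leq M$, that is, the $a_i$ must be bounded by the \emph{smaller} outer factor, whereas the corollary only asserts $k_i < N$.

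However, your proposed repair is only half a repair. Swapping the roles of $N$ and $M$ handles the regime $M \geq N$ (then $k_i < N \leq M$ gives $k_i \leq N$, Proposition~\ref{floor} applies in the swapped orientation, and monotonicity of $f_p$ converts $f_p(M)$ into $f_p(N)$). But it does nothing for the regime $M < N$ with some $k_i > M$, and there the argument cannot be rescued because the statement itself fails. Take $N = 10$, $M = 2$, $(k_1,k_2,k_3) = (9,9,2)$. Then $k_1+k_2+k_3 = 20 = NM$ and each $k_i < 10$, yet
\[
v_2\!\left(\frac{20!}{9!\,9!\,2!}\right) = 18 - 7 - 7 - 1 = 3 < 8 = v_2(10!),
\]
so $10!$ does not divide $20!/(9!\,9!\,2!)$. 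Thus the corollary as written is false in general; it requires the extra hypothesis $k_i \leq M$ (equivalently, that the $k_i$ are bounded by the smaller factor). In the paper's only invocation, Theorem~\ref{SFText}, one has $M = N-1$, so $k_i < N$ already gives $k_i \leq M$ and the application is legitimate --- but neither the paper's one-liner nor your two-case fix establishes the corollary at the generality at which it is stated. The correct move is to add $k_i \leq M$ (or $M \leq N$ together with $k_i \leq M$) to the hypotheses, after which Proposition~\ref{floor} applies directly and no role-swapping is needed.
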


\begin{proof}
It suffices to show that for all positive primes $p, v_p(N!)\leq v_p(\frac{(NM)!}{k_1!k_2!\cdots k_m!})$; in light of Proposition \ref{legendre} this is tantamount to showing that $f_p(NM)\geq f_p(N)+f_p(k_1)+f_p(k_2)+\cdots +f_p(k_m)$, and this follows immediately from Proposition \ref{floor}.
\end{proof}

We now give the analogous result for SFT rings. In this theorem, we will assume that $\Gamma$ is a nontrivial (that is, not finitely generated) SFT ideal. In the case that $\Gamma$ is finitely generated, its SFT data can be chosen as $(\Gamma,\Gamma, 1)$.

\begin{thm}\label{SFText}
Let $R\subseteq T$ be rings. If $\Gamma$ is a nontrivial SFT ideal of $R$ with data $(\Gamma, B, N)$,
then $\Gamma T$ is also an SFT ideal of $T$ with data $(\Gamma
T, BT, N(N-1))$.
\end{thm}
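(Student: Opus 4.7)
The plan is to take an arbitrary element $s \in \Gamma T$, write it as $s = \sum_{i=1}^m \gamma_i t_i$ with $\gamma_i \in \Gamma$, $t_i \in T$, and expand $s^{N(N-1)}$ using the multinomial theorem:
\[
s^{N(N-1)} = \sum_{k_1+\cdots+k_m=N(N-1)} \frac{(N(N-1))!}{k_1!\cdots k_m!}\, \gamma_1^{k_1}\cdots\gamma_m^{k_m}\, t_1^{k_1}\cdots t_m^{k_m}.
\]
The goal becomes to show that every such summand lies in $BT$, since the $t$-factors live in $T$ and can be pulled outside once the $\gamma$-product (times its scalar coefficient) is shown to lie in $B$.

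I would then split the sum according to whether any $k_i \geq N$. In that (easier) case, the SFT hypothesis on single elements gives $\gamma_i^N \in B$, hence $\gamma_i^{k_i} \in B$, and the whole monomial $\gamma_1^{k_1}\cdots\gamma_m^{k_m}$ lands in $B$, so the term lies in $BT$ regardless of the coefficient.

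The main obstacle is the case in which every $k_i \leq N-1$. Here I would set $M = N-1$ so that $k_1+\cdots+k_m = NM$ with each $k_i < N$, which is exactly the hypothesis of Corollary \ref{ala}. That corollary supplies an integer $c$ with
\[
\frac{(NM)!}{k_1!\cdots k_m!} = N!\cdot c.
\]
Since $N(N-1) \geq N$ (recall nontriviality of $\Gamma$ forces $N \geq 2$), I can partition the $N(N-1)$ factors of $\gamma_1^{k_1}\cdots\gamma_m^{k_m}$ into a block of $N$ of them, say $\alpha_1\cdots\alpha_N$, and a remaining product $\beta \in \Gamma^{N(N-2)} \subseteq R$. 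Applying the ``in particular'' consequence of Theorem \ref{StrongConvergence} yields $N!\,\alpha_1\cdots\alpha_N \in B$, and multiplying by the integer $c$ and the element $\beta$ keeps the product inside $B$. Hence the entire term lies in $BT$.

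Combining the two cases gives $s^{N(N-1)} \in BT$, so since $BT$ is evidently finitely generated and contained in $\Gamma T$, this proves $\Gamma T$ is SFT with data $(\Gamma T, BT, N(N-1))$. The delicate point is the second case: producing the correct divisibility via the $p$-adic valuation machinery of Proposition \ref{floor}/Corollary \ref{ala}, and then pairing that $N!$ with Theorem \ref{StrongConvergence} to absorb the arithmetic obstruction that prevents a naive multinomial bound from working at exponent $N$ alone.
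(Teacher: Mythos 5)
Your proposal is correct and follows essentially the same route as the paper's: a multinomial expansion of $s^{N(N-1)}$, a case split on whether some exponent $k_i$ reaches $N$, and in the remaining case the combination of Corollary~\ref{ala} (to extract the factor $N!$ from the multinomial coefficient) with the ``in particular'' clause of Theorem~\ref{StrongConvergence}. Your explicit factoring of the degree-$N(N-1)$ monomial into a block $\alpha_1\cdots\alpha_N$ and a remainder $\beta$ is just a more carefully written version of the absorption step the paper leaves implicit.
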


\begin{proof}
Let $\gamma:=\sum_{i=1}^ma_it_i$ be an element of $\Gamma T$ (with $a_i\in \Gamma$ and $t_i\in T$). Note that
\[
\gamma^{N^2-N}=\sum_{k_1+k_2+\cdots +k_m=N^2-N} \frac{(N^2-N)!}{k_1!k_2!\cdots k_m!}(a_1t_1)^{k_1}(a_2t_2)^{k_2}\cdots (a_mt_m)^{k_m}
\]

\noindent where each $k_i\geq 0$.

Of course, if any of the powers, $k_i$, is greater than or equal to $N$, then its corresponding term from the above sum is in $BT$, and so we need only consider the terms such that each $k_i\leq N-1$. 

Under this assumption, let $\frac{(N(N-1))!}{k_1!k_2!\cdots k_m!}(a_1t_1)^{k_1}(a_2t_2)^{k_2}\cdots(a_mt_m)^{k_m}$ be a typical element of the sum above with each $k_i\leq N-1$. As each $k_i\leq N-1$ and $N(N-1)=k_1+k_2+\cdots+k_m$ then $m\geq N$. Since each $a_i\in\Gamma, m\geq N$, and $\frac{(N(N-1))!}{k_1!k_2!\cdots k_m!}$ is divisible by $N!$ (by Corollary \ref{ala}), we have by Theorem \ref{StrongConvergence} that the term $\frac{(N(N-1))!}{k_1!k_2!\cdots k_m!}(a_1t_1)^{k_1}(a_2t_2)^{k_2}\cdots(a_mt_m)^{k_m}\in BT$ and this establishes the theorem.
\end{proof}

We remark that to see that the exponent $N^2-N$, is necessary in some situations, it is easiest to consider a ring of characteristic $0$ containing an SFT ideal $I$ with data $(I,B, N)$ and then consider the ideal $I[x]$ in $R[x]$. Theorem \ref{SFText} shows that if $I$ is SFT, then so is $I[x]$, but we point out that the question as to whether the polynomial extension of an SFT ring is also SFT is still open.

\section{Valuation and Pr\"{u}fer Domains}

In the spirit of the previous results we look at some cases where the SFT and VSFT properties are equivalent and in particular, we explore the case in which our domain is a valuation domain or a Pr\"{u}fer domain. Of course these classes of domains are of great independent interest, but are especially relevant with regard to the applications of the SFT property, in particular, in the study of $\text{Spec}(R[[x]])$ (for example \cite{ar1973b}, \cite{kp2}, \cite{CKT}).

We begin with a result that first appeared in \cite{coco}. This result has a nice corollary that shows in domains that containing fields of characteristic $0$, the VSFT and SFT property are equivalent.

\begin{thm}
Let $R$ be a domain of characteristic $0$ and $\Gamma\subset R$ be a SFT ideal of $R$ with data $(\Gamma, B, N)$. Suppose that $P$ is a prime ideal minimal
over $\Gamma$ such that $P \bigcap \mathbb{Z}= (0)$. Then in the
localization $R_P$, $\Gamma$ survives and $(\Gamma R_P)^N \subseteq
BR_P$. In particular, if $R$ contains the rational numbers, then all
SFT ideals are VSFT.
\end{thm}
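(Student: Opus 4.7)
The plan is to reduce the statement to a direct consequence of Theorem \ref{StrongConvergence}, which supplies the key divisibility relation $N!\, a_1a_2\cdots a_N\in B$ for any $a_1,\ldots,a_N\in\Gamma$. The whole point of the hypothesis $P\cap\mathbb{Z}=(0)$ is to arrange that the integer $N!$ becomes invertible after localizing at $P$, so that this factor can simply be divided out.

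First I would check that $\Gamma$ survives in $R_P$: since $P$ is minimal over $\Gamma$ we have $\Gamma\subseteq P$, hence $\Gamma R_P\subseteq PR_P\subsetneq R_P$. Next, since $R$ has characteristic $0$ the map $\mathbb{Z}\to R$ is injective, and the hypothesis $P\cap\mathbb{Z}=(0)$ means no nonzero integer lies in $P$; in particular $N!\in R\setminus P$, so $N!$ is a unit in $R_P$.

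For the inclusion $(\Gamma R_P)^N\subseteq BR_P$, let $\alpha_1,\ldots,\alpha_N\in\Gamma R_P$. Writing each $\alpha_i=\gamma_i/s_i$ with $\gamma_i\in\Gamma$ and $s_i\in R\setminus P$, Theorem \ref{StrongConvergence} gives $N!\,\gamma_1\gamma_2\cdots\gamma_N\in B$, so that
\[
N!\,\alpha_1\alpha_2\cdots\alpha_N \;=\; \frac{N!\,\gamma_1\gamma_2\cdots\gamma_N}{s_1s_2\cdots s_N}\;\in\; BR_P.
\]
Multiplying by the unit $(N!)^{-1}\in R_P$ yields $\alpha_1\cdots\alpha_N\in BR_P$. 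A general element of $(\Gamma R_P)^N$ is a finite sum of such products, so by additivity $(\Gamma R_P)^N\subseteq BR_P$, which is finitely generated over $R_P$.

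For the ``in particular'' clause, if $\mathbb{Q}\subseteq R$ then $N!$ is already a unit in $R$ itself, and no localization is needed: Theorem \ref{StrongConvergence} gives $N!\,a_1\cdots a_N\in B$ for all $a_1,\ldots,a_N\in\Gamma$, and multiplying through by $(N!)^{-1}\in R$ yields $a_1\cdots a_N\in B$. Summing over decompositions of an arbitrary element of $\Gamma^N$ shows $\Gamma^N\subseteq B$, so $\Gamma$ is VSFT with data $(\Gamma,B,N)$. There is no real obstacle here; the only mild technicality is the routine passage from the pure-product statement in Theorem \ref{StrongConvergence} to the ideal-theoretic statement $\Gamma^N\subseteq B$ (or $(\Gamma R_P)^N\subseteq BR_P$), which is handled by writing a general element of the $N$th power as a sum of $N$-fold products and invoking additivity.
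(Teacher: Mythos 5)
Your proof is correct. The paper itself does not reproduce a proof of this theorem (it cites \cite{coco}), so there is no in-paper argument to compare against, but your approach is exactly the natural one given the tools stated: you apply the key conclusion of Theorem~\ref{StrongConvergence}, namely $N!\,a_1\cdots a_N \in B$ for $a_i \in \Gamma$, and use the hypothesis $P\cap\mathbb{Z}=(0)$ (in characteristic $0$) to make $N!$ a unit in $R_P$ so the factor can be cancelled. Your reduction of a general element of $\Gamma R_P$ to the form $\gamma/s$ with $\gamma\in\Gamma$, $s\notin P$ is legitimate (clear denominators inside the ideal), the passage from pure products to sums is routine additivity, and the direct treatment of the $\mathbb{Q}\subseteq R$ case without localization is cleaner than trying to glue local statements. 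One small observation worth recording: your argument never uses that $P$ is \emph{minimal} over $\Gamma$, only that $\Gamma\subseteq P$ and $P\cap\mathbb{Z}=(0)$, so you have in fact proved a slightly more general statement than the one asserted.
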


We will now show that in valuation domains, and more generally, in
Pr$\ddot{\text{u}}$fer domains, the conditions are equivalent. We begin with a more general statement that is more sweeping in nature.

\begin{thm}\label{rcd}
Let $R$ be a root closed domain and $I\subseteq R$ an SFT ideal with data $(I,\mathcal{B},n)$. If $\mathcal{B}$ can be chosen to be principal, then $I$ is VSFT.
\end{thm}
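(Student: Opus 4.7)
The plan is to show directly that $I^n\subseteq\mathcal{B}$, so that $I$ is VSFT with data $(I,\mathcal{B},n)$ (the same $n$ as in the SFT data!). Write $\mathcal{B}=(b)$ and let $K$ denote the quotient field of $R$.

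First I would dispose of the degenerate case $b=0$: in that case the SFT hypothesis says $x^n=0$ for every $x\in I$, and then $I$ is trivially VSFT with data $(I,(0),n)$. So assume $b\neq 0$ and work inside $K$.

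The main step is the following observation. Given any $n$ elements $x_1,\ldots,x_n\in I$, the SFT hypothesis provides $s_i\in R$ with $x_i^n=b\,s_i$ for each $i$. Multiplying these relations gives
\[
(x_1x_2\cdots x_n)^n \;=\; b^n\, s_1s_2\cdots s_n,
\]
so in $K$ the element $\alpha:=(x_1x_2\cdots x_n)/b$ satisfies $\alpha^n=s_1s_2\cdots s_n\in R$. Since $R$ is root closed, $\alpha\in R$, i.e., $x_1x_2\cdots x_n\in (b)=\mathcal{B}$. Because $I^n$ is generated as an ideal by such $n$-fold products, this yields $I^n\subseteq\mathcal{B}$, as desired.

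There is no real obstacle here; the only thing to be a little careful about is ensuring the root-closure argument is carried out inside $K$ (which is why one separates the $b=0$ case, where $K$-quotients by $b$ are meaningless) and noting that the trick crucially uses that $\mathcal{B}$ is principal, so that raising a product of $n$ factors to the $n$th power produces exactly $b^n$ and not a sum of monomials in several generators. This explains why the hypothesis ``$\mathcal{B}$ can be chosen principal'' is essential to the argument as given, and it also explains why the resulting VSFT index can be taken equal to the original SFT index $n$.
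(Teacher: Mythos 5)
Your argument is essentially identical to the paper's: pick $n$ elements of $I$, use the SFT relation $x_i^n = b s_i$, multiply to get $(x_1\cdots x_n/b)^n = s_1\cdots s_n\in R$, and invoke root-closure to conclude $x_1\cdots x_n\in(b)$, hence $I^n\subseteq\mathcal{B}$. The separate treatment of $b=0$ is a minor extra precaution (and is automatic here since $R$ is a domain, forcing $I=(0)$ in that case), but otherwise the route and the resulting index $n$ match the paper exactly.
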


\begin{proof} 
Let $I$ be an ideal of $R$ with SFT
data $(I, \mathcal{B}, n)$. By hypothesis, we may choose $\mathcal{B} = (\gamma)$ where $\gamma \in R$.
We now choose $n$ arbitrary elements $x_1, x_2, \cdots, x_n$
from $I$. Then for all $i$, $x_i^n = \gamma r_i$, where $r_i \in
R$, and this leads to the equation

\[
x_1^nx_2^n\cdots x_n^n=\gamma^nr_1r_2\cdots r_n.
\]

 Note that
$(\frac{x_1x_2\cdots x_n}{\gamma})^n = r_1r_2\cdots r_n \in R$ and so
as $R$ is root closed, $\frac{x_1x_2\cdots x_n}{\gamma} \in R$. Hence
$x_1x_2\cdots x_n \in (\gamma)$ and $I$ is VSFT.
\end{proof}

The next corollary is immediate from Theorem \ref{rcd} and Theorem \ref{anyradical}.

\begin{cor}\label{rcf}
If $R$ is a root closed domain with the property that any prime ideal is the radical of a principal ideal, then the SFT and VSFT properties are equivalent.
\end{cor}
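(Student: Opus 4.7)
The plan is to combine Theorem \ref{PrimeSufficient}, Theorem \ref{anyradical}, and Theorem \ref{rcd} in sequence. Since every VSFT ideal is automatically SFT, only the forward direction requires proof: assume $R$ is a root closed SFT domain in which every prime ideal is the radical of a principal ideal, and show that $R$ is VSFT.

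By Theorem \ref{PrimeSufficient}, it suffices to prove that every prime ideal $P\subseteq R$ is VSFT. Fix such a $P$. By hypothesis we may choose $b\in R$ with $P=\sqrt{(b)}$; in particular $(b)\subseteq P\subseteq\sqrt{(b)}$. Since $R$ is SFT, the ideal $P$ is SFT, and Theorem \ref{anyradical} then produces a positive integer $m$ such that $P$ has SFT data $(P,(b),m)$. Thus we have arranged for the finitely generated sub-ideal witnessing the SFT property of $P$ to be principal.

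Now Theorem \ref{rcd} applies directly: since $R$ is root closed and the SFT data of $P$ has a principal second component, $P$ is VSFT. As $P$ was arbitrary, every prime of $R$ is VSFT, and a final application of Theorem \ref{PrimeSufficient} gives that $R$ is VSFT. I do not foresee any obstacle here; the entire content of the corollary is packaged into the earlier theorems, and the only ``work'' is to verify that the hypothesis of Theorem \ref{rcd} (principality of $\mathcal{B}$) can be arranged by invoking Theorem \ref{anyradical} against the assumed principal radical presentation of $P$.
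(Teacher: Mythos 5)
Your proposal is correct and follows essentially the same path as the paper's own proof: reduce to primes via Theorem~\ref{PrimeSufficient}, use the hypothesis that $P=\sqrt{(b)}$ together with Theorem~\ref{anyradical} to arrange principal SFT data $(P,(b),m)$, and then apply Theorem~\ref{rcd}. The only difference is that you spell out the inclusion $(b)\subseteq P\subseteq\sqrt{(b)}$ explicitly, which the paper leaves implicit.
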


\begin{proof}
It suffices to show that if $R$ is SFT, then it is VSFT. To this end, we recall that by Theorem \ref{PrimeSufficient} it suffices to show that each prime ideal $P$ is VFST. Suppose that $P\subset R$ is prime with SFT data $(P,\mathcal{B},n)$. Our assumption that $P$ is the radical of a principal ideal, coupled with Theorem \ref{anyradical}, assures that we can select $\mathcal{B}$ to be principal, and now Theorem \ref{rcd} shows that $P$ is VSFT.
\end{proof}

\begin{prop}\label{VDSFT}
In a valuation domain $V$, the SFT and the VSFT conditions are
equivalent.
\end{prop}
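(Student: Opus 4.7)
The plan is to observe that Proposition \ref{VDSFT} is almost immediate from Theorem \ref{rcd} once one identifies the two structural features of a valuation domain that come into play: every finitely generated ideal is principal, and valuation domains are integrally closed (in particular root closed). The direction VSFT $\Rightarrow$ SFT was already noted after Definition \ref{VSFTdef}, so only the converse requires argument.

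For the converse, I would let $I\subseteq V$ be an arbitrary SFT ideal with data $(I,\mathcal{B},n)$. Since $V$ is a valuation domain, the finitely generated ideal $\mathcal{B}$ is principal, say $\mathcal{B}=(\gamma)$ for some $\gamma\in V$. Thus the SFT data may be written as $(I,(\gamma),n)$, which places us exactly in the hypothesis of Theorem \ref{rcd}: a root closed domain together with an SFT ideal whose finitely generated witness can be chosen principal. Applying that theorem, $I$ is VSFT, and since $I$ was an arbitrary SFT ideal we conclude that every ideal of $V$ is VSFT, so $V$ itself is VSFT.

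There is really no obstacle here; the entire content of the statement has been absorbed into Theorem \ref{rcd}, and Proposition \ref{VDSFT} is serving as a clean illustration of that more general principle in a natural setting. The only minor point worth stating explicitly in the write-up is the standard fact that a valuation domain is integrally closed (hence root closed) and that its finitely generated ideals are principal, both of which are classical.
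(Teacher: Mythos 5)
Your proof is correct and rests on the same engine as the paper's, namely Theorem \ref{rcd}, but you take a slightly more direct route. The paper invokes Corollary \ref{rcf}, which itself routes through Theorem \ref{PrimeSufficient} (reduction to prime ideals) and Theorem \ref{anyradical} (to replace the witness ideal $\mathcal{B}$ by a principal one whose radical contains $P$); to apply that corollary, the paper must separately observe that in an SFT valuation domain every prime is the radical of a principal ideal. You sidestep all of that by noting that for \emph{any} SFT ideal $I$ in a valuation domain, the finitely generated witness $\mathcal{B}$ in the SFT data is automatically principal, so Theorem \ref{rcd} applies to $I$ immediately, with no reduction to primes and no re-choosing of $\mathcal{B}$. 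The payoff is a shorter argument with fewer dependencies; the paper's route has the advantage of exhibiting the proposition as a corollary of the broader criterion \ref{rcf}, which is the form used again in the Pr\"{u}fer case.
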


\begin{proof} 
If $V$ is a valuation domain, it is integrally closed, and hence root closed. We also observe that if $V$ is SFT, then every prime ideal is the radical of a finitely generated, and hence principal, ideal as $V$ is a valuation domain. Corollary \ref{rcf} applies.
\end{proof}

\begin{prop}
In a Pr$\ddot{\text{u}}$fer domain, the SFT and the VSFT conditions
are equivalent.
\end{prop}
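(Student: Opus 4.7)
The plan is to reduce to the prime ideal case via Theorem \ref{PrimeSufficient} and then to exploit the local-global structure characteristic of Pr\"ufer domains. Since every VSFT ring is SFT, only one direction requires work: assuming $R$ is an SFT Pr\"ufer domain, I want to show $R$ is VSFT. By Theorem \ref{PrimeSufficient}, it suffices to prove that each prime ideal $P \subseteq R$ is VSFT. Fix such a $P$ with SFT data $(P, \mathcal{B}, n)$; my goal is to show $P^n \subseteq \mathcal{B}$, so that $(P, \mathcal{B}, n)$ serves as VSFT data.

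Given $x_1, \ldots, x_n \in P$, the SFT condition gives $x_i^n \in \mathcal{B}$ for each $i$, and so $(x_1 \cdots x_n)^n = x_1^n \cdots x_n^n \in \mathcal{B}^n$. The entire argument therefore reduces to the following key claim, which I expect to be the main obstacle: in a Pr\"ufer domain $R$, if $\mathcal{B}$ is a finitely generated ideal and $y^n \in \mathcal{B}^n$, then $y \in \mathcal{B}$. Granting this, $x_1 \cdots x_n \in \mathcal{B}$ for every choice of $n$-tuple in $P$, and since $P^n$ is generated as an ideal by such products, we conclude $P^n \subseteq \mathcal{B}$.

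To establish the key claim, I plan to argue via the local-global principle. For each maximal ideal $M \subseteq R$, the ring $R_M$ is a valuation domain, so $\mathcal{B} R_M$ is finitely generated, hence principal; write $\mathcal{B} R_M = (c) R_M$. The hypothesis $y^n \in \mathcal{B}^n$ gives $y^n \in \mathcal{B}^n R_M = (c^n) R_M$, so the valuation $v_M$ on $R_M$ yields $n v_M(y) \geq n v_M(c)$, hence $v_M(y) \geq v_M(c)$, i.e., $y \in (c) R_M = \mathcal{B} R_M$. Finally, $\bigcap_M \mathcal{B} R_M = \mathcal{B}$ holds in any domain (via the conductor $(\mathcal{B} : y)$, which cannot be contained in any maximal ideal once $y$ localizes into each $\mathcal{B} R_M$), so $y \in \mathcal{B}$. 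This is precisely where the Pr\"ufer hypothesis does work beyond Theorem \ref{rcd}: there $\mathcal{B}$ had to be principal, whereas here it is only finitely generated, and the missing principality is recovered locally at each maximal ideal.
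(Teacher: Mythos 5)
Your proof is correct, and it takes a genuinely different route from the paper's. The paper simply cites Proposition~3.1 of Arnold's paper \cite{ar1973b}, which states that a prime $P$ in a Pr\"ufer domain is SFT if and only if there is a finitely generated $A$ with $P^2 \subseteq A \subseteq P$; that result is invoked as a black box, and it even gives a uniform VSFT index of $2$. Your argument, by contrast, is self-contained: after reducing to primes via Theorem~\ref{PrimeSufficient}, you isolate the clean lemma that in a Pr\"ufer domain $y^n \in \mathcal{B}^n$ forces $y \in \mathcal{B}$ for any finitely generated ideal $\mathcal{B}$ and $y \in R$, proved by passing to each $R_M$ (a valuation domain, where $\mathcal{B}R_M$ is principal and the valuation inequality divides through by $n$) and then recovering $y \in \mathcal{B}$ globally from the conductor $(\mathcal{B}:y)$ not lying in any maximal ideal. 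This yields $P^n \subseteq \mathcal{B}$ directly with the same index $n$ as the SFT data. Your closing remark is also apt: your lemma is exactly the finitely-generated upgrade of Theorem~\ref{rcd} (which required a principal sub-ideal), with principality restored locally rather than assumed globally; in this sense your approach subsumes Proposition~\ref{VDSFT} as the local case and makes the logical dependence on root closure explicit, whereas the paper's citation of Arnold obscures it.
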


\begin{proof} 
The proof of this fact depends heavily on Proposition 3.1 from \cite{ar1973b}. This result states that a prime ideal $P$ of a Pr\"{u}fer domain is SFT if
and only if there exists a finitely generated ideal $A$ in $P$ such
that $P^2 \subset A \subset P$ or in other words, if and only if $P$
is VSFT.
\end{proof}

\begin{rem}
It is clear from the above theorem that if $R$
is integrally closed (or completely integrally closed or
$\Omega$-almost integrally closed, see \cite{cd2}), then the VSFT and SFT conditions
are equivalent, provided the sub-ideal can be chosen principal. Since the primes used in the the examples presented in Section 3 had the property that they were the radical of a principal ideal, the key was that neither of the domains were integrally closed.
\end{rem}

\bibliography{biblio2}{}

\begin{thebibliography}{10}

\bibitem{Ar1973}
J.~T. Arnold.
\newblock Krull dimension in power series rings.
\newblock {\em Trans. Amer. Math. Soc.}, 177:299--304, 1973.

\bibitem{ab}
J.~T. Arnold and J.~W. Brewer.
\newblock On flat overrings, ideal transforms and generalized transforms of a
  commutative ring.
\newblock {\em J. Algebra}, 18:254--263, 1971.

\bibitem{ar1973b}
Jimmy~T. Arnold.
\newblock Power series rings over {P}r\"{u}fer domains.
\newblock {\em Pacific J. Math.}, 44:1--11, 1973.

\bibitem{CKT}
Gyu~Whan Chang, Byung~Gyun Kang, and Phan~Thanh Toan.
\newblock The {K}rull dimension of power series rings over almost {D}edekind
  domains.
\newblock {\em J. Algebra}, 438:170--187, 2015.

\bibitem{MR242767}
Eckford Cohen.
\newblock Legendre's identity.
\newblock {\em Amer. Math. Monthly}, 76:611--616, 1969.

\bibitem{coco}
J.~Condo and J.~Coykendall.
\newblock Strong convergence properties of sft rings.
\newblock {\em Comm. Algebra}, 27(5):2073--2085, 1999.

\bibitem{Co2002}
J.~Coykendall.
\newblock The{ SFT} property does not imply finite dimension for power series
  rings.
\newblock {\em J. Algebra}, 256(1):85--96, 2002.

\bibitem{cd2}
Jim Coykendall and Tridib Dutta.
\newblock A generalization of integrality.
\newblock {\em Canad. Math. Bull.}, 53(4):639--653, 2010.

\bibitem{ek}
Salma Elaoud and Byung~Gyun Kang.
\newblock The {SFT} property and the ring {$R((X))$}.
\newblock {\em J. Pure Appl. Algebra}, 204(2):270--279, 2006.

\bibitem{KLLP}
B.~G. Kang, K.~A. Loper, T.~G. Lucas, M.~H. Park, and P.~T. Toan.
\newblock The {K}rull dimension of power series rings over non-{SFT} rings.
\newblock {\em J. Pure Appl. Algebra}, 217(2):254--258, 2013.

\bibitem{kp}
B.~G. Kang and M.~H. Park.
\newblock A note on {$t$}-{SFT}-rings.
\newblock {\em Comm. Algebra}, 34(9):3153--3165, 2006.

\bibitem{kp2}
B.~G. Kang and M.~H. Park.
\newblock S{FT} stability via power series extension over {P}r\"ufer domains.
\newblock {\em Manuscripta Math.}, 122(3):353--363, 2007.

\bibitem{LL}
K.~Alan Loper and Thomas~G. Lucas.
\newblock Constructing chains of primes in power series rings.
\newblock {\em J. Algebra}, 334:175--194, 2011.

\bibitem{LL2}
K.~Alan Loper and Thomas~G. Lucas.
\newblock Constructing chains of primes in power series rings, {II}.
\newblock {\em J. Algebra Appl.}, 12(1):1250123, 30, 2013.

\bibitem{Nagata1954}
Masayoshi Nagata.
\newblock Note on integral closures of {N}oetherian domains.
\newblock {\em Mem. Coll. Sci. Univ. Kyoto Ser. A. Math.}, 28:121--124, 1954.

\bibitem{TK}
Phan~Thanh Toan and Byung~Gyun Kang.
\newblock Krull dimension of power series rings over non-{SFT} domains.
\newblock {\em J. Algebra}, 499:516--537, 2018.

\end{thebibliography}
\bibliographystyle{plain}

\end{document}